\newtheorem{theorem}{Theorem}
\newtheorem{lemma}{Lemma}
\newtheorem*{notation*}{Notation}
\newtheorem*{proposition*}{Proposition}
\theoremstyle{definition}
\newtheorem{assumption}{Assumption}
\newtheorem{definition}{Definition}
\newcommand{\bqed}{\nopagebreak\mbox{}\hfill$\blacksquare$\smallskip}
\def\e{\epsilon}
\def \foral {\textrm{for all }}
\def \pr  {\mathbf{P}}
\def \R  {\mathbb{R}}
\def \E  {\mathbf{E}}
\begin{document}


\title{\vspace{0.2in}{\LARGE \textbf{The Reliability Value of Storage in a Volatile Environment}}}
\author{Ali ParandehGheibi, Mardavij Roozbehani, Asuman Ozdaglar, and Munther A Dahleh
\thanks{This work was supported by the National Science Foundation.}
\thanks{The authors are with the Laboratory for Information and Decision Systems, Department of Electrical Engineering and Computer Science, Massachusetts Institute of Technology, Cambridge, MA. Emails: \{parandeh, mardavij, asuman, dahleh\}@mit.edu.}%
}

\maketitle
\thispagestyle{empty}\pagestyle{empty}

\begin{abstract}
This paper examines the value of storage in securing reliability of a system with uncertain supply and demand, and supply friction. The storage is frictionless as a supply source, but once used, it cannot be filled up instantaneously. The focus application is a power supply network in which the base supply and demand are assumed to match perfectly, while deviations from the base are modeled as random shocks with stochastic arrivals. Due to friction, the random surge shocks cannot be tracked by the main supply sources. Storage, when available, can be used to compensate, fully or partially, for the surge in demand or loss of supply. The problem of optimal utilization of storage with the objective of maximizing system reliability is formulated as minimization of the expected discounted cost of blackouts over an infinite horizon. It is shown that when the stage cost is linear in the size of the blackout, the optimal policy is myopic in the sense that all shocks are compensated by storage up to the available level of storage. However, when the stage cost is strictly convex, it may be optimal to curtail some of the demand and allow a small current blackout in the interest of maintaining a higher level of reserve to avoid a large blackout in the future. The value of storage capacity in improving system's reliability, as well as the effects of the associated optimal policies under different stage costs on the probability distribution of blackouts are examined.
\end{abstract}

\begin{IEEEkeywords}
Storage, Ramp Constraints, Reliability, Probability of Large Blackouts
\end{IEEEkeywords}

\vspace{-0.05in}

\section{Introduction}
Supply and demand in electric power networks are subject to exogenous, impulsive, and unpredictable shocks due to generator outages, failure of transmission equipments or unexpected changes in weather conditions. On the other hand, environmental causes along with price pressure have led to a global trend in large-scale integration of renewable resources with stochastic output. This is likely to increase the magnitude and frequency of impulsive shocks to the supply side of the network. We ask, what is the value of storage in mitigating volatility of supply and demand, and what are the fundamental limits that cannot be overcome by storage due to physical ramp constraints, and finally, what are the impacts of different control policies on system reliability, for instance, on the expected cost or the probability of large blackouts?

In this paper our focus is on the reliability value of storage, defined as the maximal improvement in system reliability as a function of storage capacity. Two metrics for quantifying reliability in a system are considered: The first is the expected long-term discounted cost of blackouts (cost of blackouts (COB) metric), and the second is the probability of loss of load by a certain amount or less.

We model the system as a supply-demand model that is subject to random arrivals of energy deficit shocks, and a storage of limited capacity, with a ramp constraint on charging, but no constraint on discharging. The storage may be used to partially or completely mask the shocks to avoid blackouts. We formulate the problem of optimal storage management as the problem of minimization of the COB metric, and provide several characterizations of the optimal cost function. By ignoring other factors such as the environment, cost of energy or storage, we characterize the value of storage purely from a reliability perspective, and examine the effects of physical constraints on system reliability.  Moreover, for a general convex stage cost function, we present various structural properties of the optimal policy.

In particular, we prove that for a linear stage cost, a \emph{myopic} policy which compensates for all shocks regardless of their size by draining from storage as much as possible, is optimal. However, for nonlinear stage costs where the penalty for larger blackouts is significantly higher, the myopic policy is not optimal. Intuitively, the optimal policy is inclined to mitigate large blackouts at the cost of allowing more frequent small blackouts. Our numerical results confirm this intuition. We further investigate the value of additional storage under different control policies, and for different ranges of system parameters. Our results suggest that if the ratio of the average rate of deficit shocks to ramp constraints is sufficiently large, there is a critical level of storage capacity above which, the value of having additional capacity quickly diminishes. When this ratio is significantly large, there seems to be another critical level for storage size below which, storage capacity provides very little value. Finally, we investigate the effect of storage size and volatility of the demand/supply process on the probability of large blackouts under various policies. We observe that for all control policies, there appears to be a critical level of storage size, above which the probability of suffering large blackouts diminishes quickly.

Recent works have examined the effects of ramp constraints on the economic value of storage \cite{MVichCDC2011_2}. Herein, our focus is on reliability. Prior research on using queueing models for characterization of system reliability, particularly in power systems, has been reported in
\cite{chechomey06a} and \cite{chomey09a}. Similar models and concepts exist in the queueing theory literature \cite{Gross}, \cite{MeynBook07}, perhaps with different application contexts. Despite similarities, our model is different than those of \cite{chechomey06a}, \cite{chomey09a} in many ways. We assume that the storage capacity (reserve in their model) is fixed and find the optimal policy for withdrawing from storage (consuming from reserve), as opposed to always draining the reserve and optimizing the capacity. Another difference is that our model of uncertainty is a compound poisson process instead of the brownian motion used in \cite{chechomey06a}, \cite{chomey09a}. We show that the myopic policy of always draining storage to mask every energy deficit shock is not optimal for strictly convex costs, and investigate the effects of nonlinear stage costs (strictly convex cost of blackouts) on the optimal policy and the statistics of blackouts.

The organization of this paper is as follows. Section \ref{sec:modeling} presents the elements of the model and the problem formulation. Section \ref{sec:main} includes the main analytical results. Section \ref{sec:numsim} presents the numerical simulations and discussions. Finally, Section \ref{sec:conclusions} includes the concluding remarks.

\begin{notation*}
Throughout the paper, $\mathbb{I}_A$ denotes the indicator function of a set $A$. The operator $[x]^+=\max\{0,x\}$ is the projection operator onto the nonnegative orthant.
\end{notation*}

\thispagestyle{empty}\pagestyle{empty}

\section{The Model} \label{sec:modeling}

We examine an abstract model of system consisting of a single consumer, a single fully controllable supplier, a supplier with stochastic output (e.g., wind), and a storage system with finite capacity (Figure \ref{fig:powersys}). These agents each represent an aggregate of several small consumers and producers. The details of the model are outlined below.
\begin{figure}[h]
\begin{center}
\includegraphics[scale=0.25]{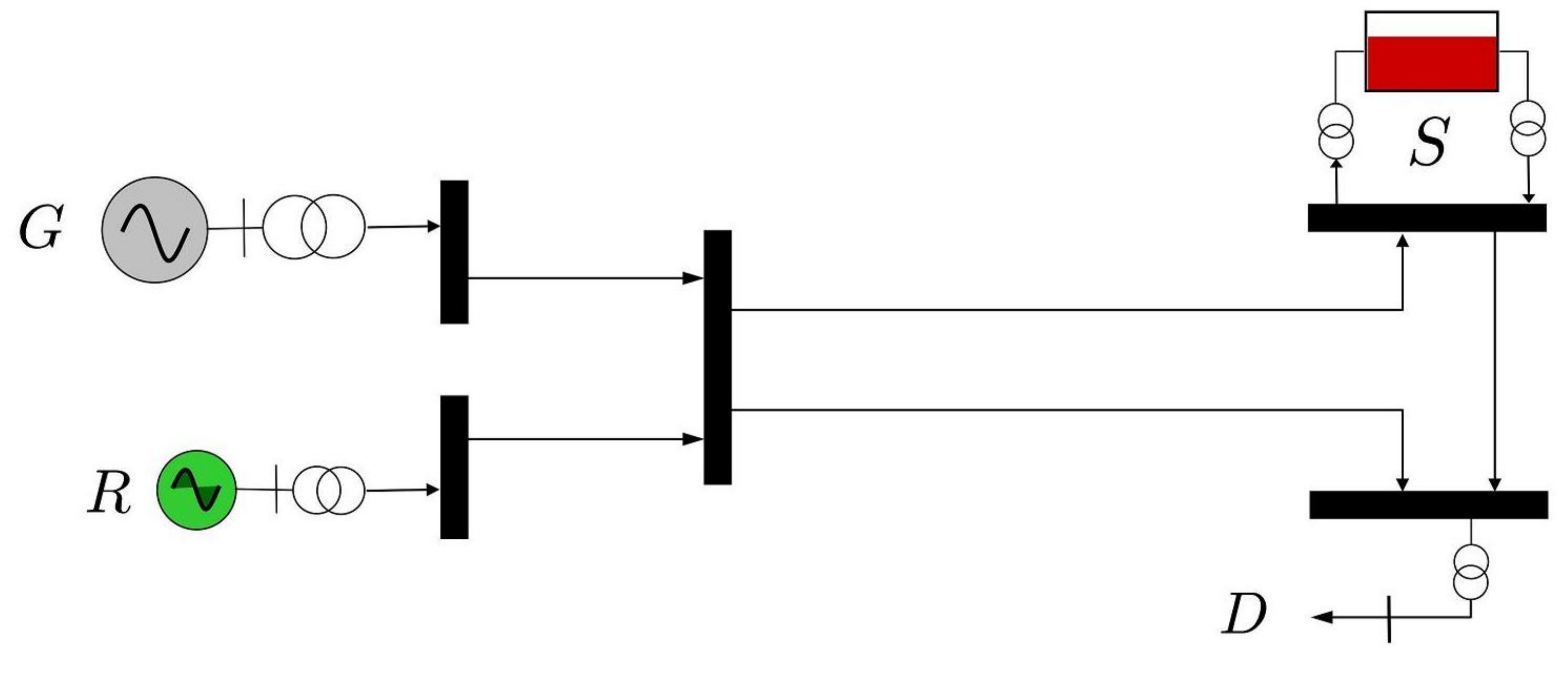}
\vspace{-0.1in}
\caption{Layout of the physical layer of a power supply network with conventional and renewable generation, storage, and demand.}
\label{fig:powersys}
\end{center}
\end{figure}
\vspace{-0.3in}
\subsection{Supply}
\subsubsection{Controllable Supply}
The controllable supply process is denoted by ${\bf G}=\{G_t:t\geq0\}$, where $G_t$ is the power output at time $t\geq0$. It is assumed that the supplier's production is subject to an upward ramp constraint, in the sense that its output cannot increase instantaneously,
\[
\frac{G_t-G_{t'}}{t-t'}\leq\zeta, \qquad \forall t: 0\leq t < t'.
\]
We do not assume a downward ramp constraint or a maximum capacity constraint on $G_t$. Thus, production can shut down instantaneously, and can meet any large demand sufficiently far in the future.
\subsubsection{Renewable Supply}
The renewable supply process is denoted by ${\bf R}=\{R_t:t\geq0\}.$ It is assumed that ${\bf R}$ can be modeled as a process with two components: ${\bf R}={\bf \overline R}+\Delta{\bf R}$, where ${\bf \overline R}=\{\overline R_t:t\geq0\}$ is a deterministic process representing the predicted renewable supply, and $\Delta{\bf R}=\{\Delta R_t:t\geq0\}$ is the residual supply assumed to be a random arrival process. Thus, at any given time $t\geq 0,$ the total forecast supply from the renewable and controllable generators is given by  ${ G_t} + {\overline R_t}.$


\subsection{Demand}
The demand process is denoted by ${\bf D}=\{D_t:t\geq0\},$ where $D_t$ is the total power demand at time $t$, assumed to be exogenous and inelastic. Similar to the renewable supply, ${\bf D}$ has two components: ${\bf D}={\bf \overline D}+\Delta{\bf D}$, where ${\bf \overline D}=\{\overline D_t:t\geq0\}$ is the predicted demand process (deterministic), and $\Delta{\bf D}=\{\Delta D_t:t\geq0\}$ is the residual demand, again, assumed to be a random arrival process.

\begin{definition}
The \emph{power imbalance} is defined as the residual demand minus the residual supply.
\begin{equation}\label{eq:NIB}
P_t= \Delta D_t-\Delta R_t
\end{equation}
The \emph{normalized energy imbalance} is defined as:
\begin{equation}\label{eq:ENIB}
W_t =\frac{P_t^2}{2\zeta}
\end{equation}
\end{definition}
\vspace{-0.4in}
\subsection{Storage}
The storage process is denoted by ${\bf s}=\{s_t\in[0,\overline{s}]:t\geq0\}$, where $s_t$ is the amount of stored energy at time $t,$ and $\overline{s}<\infty$ is the storage capacity. The storage technology is subject to an upward ramp constraint:
\[
\frac{s_t-s_{t'}}{t-t'}\leq r, \qquad \forall t: 0\leq t < t'.
\]
Thus, storage cannot be filled up instantaneously, though, it can be drained (to supply power) instantaneously. Let ${\bf U}=\{U_t:t\geq0\},$ be the power withdrawal process from storage. The dynamics of storage is then given by:
\begin{align}\label{eq:StateEv0}
s_t&=s_0+\int_0^t\mathbb{I}_{\{s_\tau<\overline{s}\}}rd\tau-\int_0^t U_\tau d{\tau}
\end{align}
It is desired to design a causal controller ${K}$ such that the control law $U_t=K(s_t,G_t+R_t-D_t)$ maximizes the system reliability objectives.

\begin{figure}[h]
\begin{center}
\includegraphics[scale=0.25]{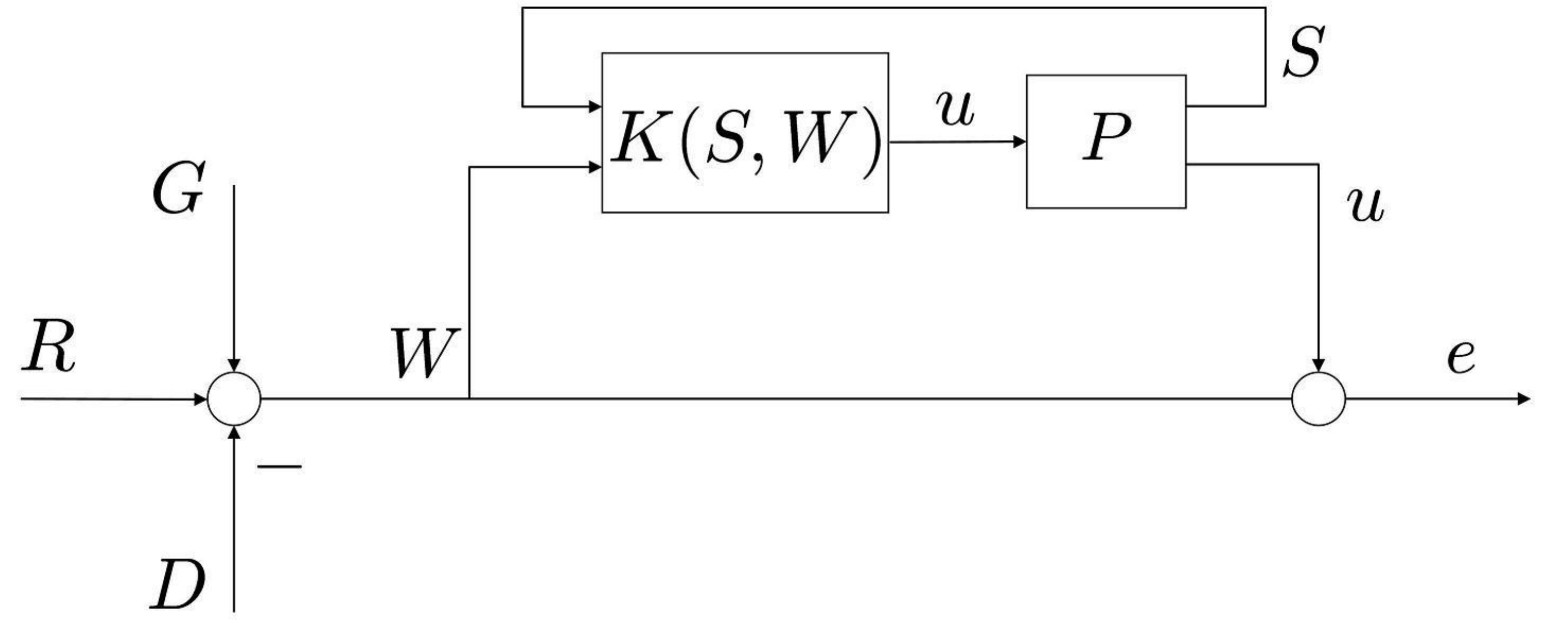}
\vspace{-0.05in}
\caption{The control layer of the power supply network in Figure \ref{fig:powersys}.}
\label{fig:controlsys}
\end{center}
\end{figure}
\vspace{-0.3in}
\subsection{Reliability Metric}
We refer to the event of not meeting the demand as a \emph{blackout}. The cost of blackouts (COB) metric is defined as the expected long-term discounted cost of blackouts:
\begin{eqnarray}\label{def:metric}
    C^\textrm{bo} \hspace{-8pt} &=& \hspace{-8pt} \E\bigg[\int_{0}^{\infty} e^{-\theta \tau} h\left([P_\tau]^+\right) d{\tau} \bigg ]
\end{eqnarray}
where $P(\cdot)$ is the power imbalance process, $h:\mathbb{R}_+\mapsto\mathbb{R}_+$ is an increasing function, and $\theta>0$ is the discount rate.
\subsection{Problem Formulation}
In this section we present the problem formulation. Before we proceed, we pose the following assumptions.
\begin{assumption}\label{Assum1}
The normalized energy imbalance process \eqref{eq:ENIB} is the jump process in a compound poisson process with arrival rate $Q$ and jump size distribution $f_W,$ where the support of $f_W$ lies within a bounded interval $[0,B].$ The maximum jump size is thus upperbounded by $B.$
\end{assumption}

\begin{assumption}\label{Assum2}
The forecast supply is equal to the forecast demand. That is:
\[
\overline D_t=G_t+\overline R_t, \qquad \forall t\geq 0
\]
\end{assumption}

Under Assumption \ref{Assum2}, the energy from storage will be used only to compensate for the power imbalance, since in the absence of an \emph{energy} shock, supply is equal to demand, and storage provides no additional utility.
Under Assumptions \ref{Assum1} and \ref{Assum2}, the dynamics of the storage process can be written as:
\begin{align}\label{eq:StateEv1}
s_t&=s_0+\int_0^t\mathbb{I}_{\{s_\tau<\overline{s}\}}rd\tau-\int_0^t \mu\left(s_{\tau^-},W_{\tau}\right)dN_{\tau}
\end{align}
where $N_t$ is a Poisson process of rate $Q$, and $W_t$ is the jump size (energy imbalance) process, drawn independently and identically from a distribution $f_W.$ Further, $\mu$ denotes a \emph{control policy}. We focus on stationary Markov policies since the energy imbalance modeled as a compound Poisson process is stationary and memoryless.  We denote the set of all such feasible policies by $\Pi$.

We are now ready to state the problem formulation.
Let $C_\mu(s)$ denote the expected long-term discounted cost of blackouts starting from an initial state $s$ and under control policy $\mu$,
\begin{eqnarray}\label{discounted_cost}
    C_\mu(s) \hspace{-8pt} &=& \hspace{-8pt} \E\bigg[\sum_{k= 1}^{\infty} e^{-\theta t_k} g\left(W_k - \mu(s_{t^-_k}, W_k)\right) \Big| s_0 = s \bigg ],\hspace{0.2in}
\end{eqnarray}
where $t_k$ is the $k$-th Poission arrival time, and $W_k = W_{t_k}$ is size of the $k$-th jump. Moreover, $g: [0,B] \rightarrow \R$ is the stage cost as a function of energy imbalance (blackout size). In this work, we assume the following assumptions hold.

\begin{assumption}\label{assump:stage_cost}
The stage cost function $g(\cdot)$ is bounded, strictly increasing and continuously differentiable. Moreover, $\E_W[g(W)] > 0$, and $g(0) = 0$.
\end{assumption}

The system reliability problem can now be formulated as an infinite horizon stochastic optimal control problem
\begin{equation}\label{eq:main}
C_\mu(s) \rightarrow \min_{\mu\in\Pi}
\end{equation}
where the optimization problem \eqref{eq:main} is subject to the state dynamics \eqref{eq:StateEv1}.
A policy $\mu^* \in \Pi$ is defined to be optimal if
\[
\mu^*\in\arg\min_{\mu\in\Pi} C_\mu(s).
\]
The associated \emph{value function} or optimal cost function is denoted by $C(s)$, where
\begin{equation}\label{value_function}
C(s) = \min_{\mu \in \Pi} C_\mu(s), \quad 0\leq s \leq \bar s.
\end{equation}
\section{Main Results}\label{sec:main}
\subsection{Characterizations of the Value Function}\label{sec:optimal_cost}
We first provide several characterizations for the value function defined in (\ref{value_function}) and establish specific properties that are useful in characterization of the optimal policy.

Let $J_\mu(s, w)$ be the expected long-term discounted cost under policy $\mu$ conditioned on the first jump arriving at time $t_1 = 0$, and being of size $w$.
Here, $s$ is the state of the system before executing the action dictated by the policy. By the memoryless property of the Poisson process, we have
\begin{eqnarray}\label{J_mu}
&&  \hspace{-27pt} J_\mu(s, w) = g(w - \mu(s,w)) \nonumber \\
&& \hspace{-25pt}  + \E\Big[\sum_{k= 1}^{\infty} e^{-\theta t_k} g(W_k \hspace{-0.01in}-\hspace{-0.01in} \mu(s_{t^-_k}, W_k)) \Big| s_0\hspace{-0.01in} = \hspace{-0.01in}s-\mu(s,w) \Big ]\hspace{0.15in}
\end{eqnarray}

We may relate $J_\mu(s,W)$ to the total expected cost $C_\mu(s)$ defined in (\ref{discounted_cost}) as follows:
\begin{equation}\label{J_C_relation}
    C_\mu(s) = \E\left[e^{-\theta t_0} J_\mu(\min\{s + r t_0, \bar s\}, W)\right],
\end{equation}
where $t_0$ is an exponential random variable with mean $1/Q$, and is independent of $W$, drawn from distribution $f_W$.

From (\ref{J_C_relation}), it is clear that from the minimization of  $J_\mu$ across all admissible policies $\Pi$, we may obtain the optimal solution to the original problem in (\ref{value_function}). The discrete-time formulation of  $J_\mu$ given by (\ref{J_mu}), facilitates deriving the Bellman equation as the necessary and sufficient optimality condition, as well as development of efficient numerical methods. We summarize these results in the following theorem.

\begin{theorem}\label{thm:bellman}
Given an admissible control policy $\mu\in\Pi$, let $J_\mu:[0,\overline{s}]\times[0,B]\mapsto\mathbb{R}$ be the function defined as in (\ref{J_mu}). A function $J:[0,\overline{s}]\times[0,B]\mapsto\mathbb{R}$ satisfies
\[
J(s,w)=J^*(s,w) \overset{\emph{def}}= \min_{\mu \in \Pi} J_\mu(s,w),\qquad\forall (s,w),
\]
if and only if it satisfies the following fixed-point equation:
\begin{eqnarray}\label{bellman_eq}
    J(s,w) \!\!\!\!\! &=& \!\!\!\!\! (T J)(s,w) \overset{\emph{def}}= \min_{u\in[0,\min\{s,w\}]} \bigg\{g(w - u) \nonumber \\
    && \hspace{-10pt}+ \ \E\Big[e^{-\theta t_0} J\big(\min\{s - u + r t_0, \bar s\}, W\big)  \Big]\bigg\},\hspace{0.25in}
\end{eqnarray}
Moreover, a stationary policy $\mu^*(s,w)$ is optimal if and only if $u=\mu^*(s,w)$ achieves the minimum in (\ref{bellman_eq}) for $J=J^*$. Finally, the value iteration algorithm
\begin{equation}\label{value_iteration}
    J_{k+1} = T J_{k},
\end{equation}
converges to  $J^*$ for any initial condition $J_0$.
\end{theorem}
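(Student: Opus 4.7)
The plan is to derive Theorem~\ref{thm:bellman} from the Banach fixed-point theorem applied to the operator $T$ acting on the Banach space $\mathcal{B}$ of bounded Borel-measurable functions $J:[0,\bar s]\times[0,B]\to\R$ equipped with the sup-norm $\|\cdot\|_\infty$. The strategy unfolds in three layers: (i) show that $T$ is a sup-norm contraction with an explicit modulus $\alpha<1$, which immediately yields a unique fixed point $\tilde J$ and the value-iteration convergence $T^k J_0\to\tilde J$; (ii) identify $\tilde J$ with the optimal value $J^*$ via a two-sided comparison using a policy-evaluation operator $T_\mu$; and (iii) characterize optimal stationary policies as pointwise minimizers of the Bellman equation at $J=\tilde J$.

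For the contraction step, the key observation is that the expectation over the exponential inter-arrival time $t_0$ produces a deterministic discount
\[
\alpha \;\overset{\emph{def}}=\; \E\bigl[e^{-\theta t_0}\bigr] \;=\; \int_0^\infty e^{-\theta\tau} Q e^{-Q\tau}\, d\tau \;=\; \frac{Q}{Q+\theta} \;\in\; (0,1).
\]
Since $g$ is bounded (Assumption~\ref{assump:stage_cost}) and the feasible action set $[0,\min\{s,w\}]$ is nonempty and compact, $T$ maps $\mathcal{B}$ into itself. For any $J_1,J_2\in\mathcal{B}$, defining $F_i(u) := g(w-u) + \E[e^{-\theta t_0}J_i(\min\{s-u+rt_0,\bar s\},W)]$, the elementary inequality $|\min_u F_1(u)-\min_u F_2(u)|\leq\sup_u|F_1(u)-F_2(u)|$ combined with $|F_1(u)-F_2(u)|\leq \alpha\|J_1-J_2\|_\infty$ gives $\|TJ_1-TJ_2\|_\infty\leq \alpha\|J_1-J_2\|_\infty$. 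Banach's theorem then supplies a unique fixed point $\tilde J\in\mathcal{B}$ and guarantees geometric convergence of $J_{k+1}=TJ_k$ to $\tilde J$ from any bounded $J_0$.

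To identify $\tilde J$ with $J^*$, I would introduce the policy operator $T_\mu J(s,w) := g(w-\mu(s,w)) + \E[e^{-\theta t_0} J(\min\{s-\mu(s,w)+rt_0,\bar s\},W)]$. Representation (\ref{J_mu}) together with the memoryless property of the Poisson process yields $J_\mu = T_\mu J_\mu$, and by construction $TJ\leq T_\mu J$ pointwise for every admissible $\mu$. Since $T_\mu$ is itself an $\alpha$-contraction whose unique fixed point is $J_\mu$, iteration and monotonicity give $T^k J_\mu \leq J_\mu$ for all $k$, and letting $k\to\infty$ yields $\tilde J\leq J_\mu$; taking the infimum over $\mu\in\Pi$ gives $\tilde J\leq J^*$. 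Conversely, selecting any $\mu^*\in\Pi$ that achieves the pointwise minimum in (\ref{bellman_eq}) at $J=\tilde J$ produces $T_{\mu^*}\tilde J = T\tilde J = \tilde J$; uniqueness of the fixed point of $T_{\mu^*}$ forces $J_{\mu^*}=\tilde J$, so $J^*\leq J_{\mu^*}=\tilde J$. Hence $\tilde J=J^*$, and the same two-sided comparison shows that a stationary $\mu$ is optimal iff it attains the minimum in the Bellman equation.

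The main obstacle I anticipate is the measurable-selection step producing $\mu^*\in\Pi$ from the pointwise minimizers. The integrand inside the minimum in (\ref{bellman_eq}) must be shown to be jointly measurable in $(s,w,u)$ and continuous in $u$, and the feasible correspondence $(s,w)\mapsto[0,\min\{s,w\}]$ must be verified to be compact-valued and measurable; under these conditions the Kuratowski--Ryll-Nardzewski theorem furnishes a Borel measurable selector. Continuity of $F_i(\cdot)$ in $u$ follows from continuity of $g$ and dominated convergence (using the bound $\|J\|_\infty$); if one additionally verifies that $T$ preserves continuity in $(s,w)$ --- which holds because the closed subspace of bounded continuous functions is invariant under $T$ by a dominated-convergence argument --- then the Berge maximum theorem supplies an upper-semicontinuous selector and the obstacle becomes routine. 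The remainder is standard discounted-cost dynamic-programming bookkeeping.
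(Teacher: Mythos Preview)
Your proposal is correct and follows essentially the same approach as the paper: the paper's proof simply states that the result follows from the contraction property of $T$, which is standard for discounted problems with bounded stage cost, and refers to \cite{DP_book}. Your write-up is a faithful and detailed expansion of exactly that argument, including the explicit modulus $\alpha=Q/(Q+\theta)$ and the identification $\tilde J=J^*$ via the policy operators $T_\mu$, plus attention to the measurable-selection issue that the paper leaves implicit.
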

\begin{proof}
The result follows from establishing the contraction property of $T$, which is standard for discounted problems with bounded stage cost. See \cite{DP_book} for more details.
\end{proof}

An alternative approach to characterization of the optimal cost function is based on continuous-time analysis of problem (\ref{value_function}), which leads to Hamilton-Jacobi-Bellman (HJB) equation. In the following theorem we present some basic properties of the optimal cost function as well as the HJB equation.

\begin{theorem}\label{thm:HJB}
Let $C(s)$ be the optimal cost function defined in (\ref{value_function}). The following statements hold:
\begin{description}
\item[\hspace{0.04in}  (i)] $C(s)$ is strictly decreasing in $s$.\vspace{0.08in}
  \item[\hspace{0.011in}  (ii)]  If the stage cost $g(\cdot)$ is convex, the optimal cost function $C(s)$ is also convex in $s$.\vspace{0.08in}
  \item[\hspace{-0.015in}  (iii)]
       If $C$ is continuously differentiable, then for all $s\in[0,\bar s],$ it satisfies the following HJB equation
  \end{description}
       \begin{eqnarray}\label{HJB_eq}
           \hspace{-10pt} \frac{d C}{d s} \hspace{-5pt} &=& \hspace{-5pt} \frac{Q+\theta}{r}  C(s) \nonumber \\
           &- & \hspace{-5pt} \frac{Q}{r} \E\Big[\min_{u\in[0, \min{\{s,W\}}]} g(W-u) + C(s-u) \Big],
       \end{eqnarray}
       with the boundary condition
       \begin{equation}\label{HJB_BC}
           \frac{d C}{d s}\Big|_{s = \bar s} = 0.
       \end{equation}
       Moreover, the optimal policy $\mu^*(s,w)$ achieves the optimal solution of the minimization problem in (\ref{HJB_eq}). Furthermore, for a given policy $\mu$, if the cost function $C_\mu(s)$ is differentiable, it satisfies the following delay differential equation
       \begin{eqnarray}\label{C_mu_DDE}
            \hspace{-0.2in}\frac{d C_\mu}{d s}\hspace{-0.05in} &=& \hspace{-0.05in}\frac{Q+\theta}{r}  C_\mu(s)  \nonumber\\
            \hspace{-0.05in}&-&\hspace{-0.05in}\frac{Q}{r} \E\Big[g(W-\mu(s,W)) + C_\mu(s-\mu(s,W)) \Big],
       \end{eqnarray}
       with the boundary condition given by (\ref{HJB_BC}).
\end{theorem}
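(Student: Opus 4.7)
The plan is to prove the three claims in order, leveraging the value iteration characterization of Theorem \ref{thm:bellman} for the structural properties (i) and (ii), and an infinitesimal dynamic programming argument for (iii).

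For (i), I would first establish that $C$ is non-increasing by induction through value iteration. Starting from $J_0\equiv 0$, an induction on the Bellman operator $T$ shows that each $J_k(\cdot, w)$ is non-increasing in $s$: larger $s$ relaxes the feasible set $[0, \min\{s,w\}]$, and for every fixed $u$ the continuation value is non-increasing in $s$ by the induction hypothesis together with monotonicity of $\min\{\cdot, \bar s\}$. Passing to the limit gives $J^*(\cdot, w)$ non-increasing, and via (\ref{J_C_relation}), so is $C$. Strict monotonicity would then follow from a coupling argument: compare two systems driven by the same Poisson sequence and jump sizes under the myopic policy $\mu(s,w)=\min\{s,w\}$ starting from $s_1<s_2$; the larger-storage system's trajectory dominates path-wise, and since $g$ is strictly increasing with $\E_W[g(W)]>0$, there is a positive-probability event on which the larger system strictly reduces the incurred blackout cost; combining this with the optimality of $C(s_1)$ yields $C(s_2)<C(s_1)$.

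For (ii), I would show that the operator $T$ preserves the class of functions that are jointly convex and non-increasing in $s$ for every $w$. Fix such a $J$. The continuation value
\[
\phi(s, w):=\E\bigl[e^{-\theta t_0} J(\min\{s+r t_0, \bar s\}, W)\bigr]
\]
is convex in $s$: for each sample $(t_0, W)$, $s\mapsto\min\{s+r t_0, \bar s\}$ is concave, and composing a convex non-increasing function with a concave function yields a convex function; expectation preserves convexity. The Bellman update $TJ(s,w)=\min_{u\in[0,\min\{s,w\}]}\{g(w-u)+\phi(s-u,w)\}$ is then the minimization of a jointly convex function of $(s,u)$ (using convexity of $g$) over a convex constraint set in $(s,u)$, hence convex in $s$. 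Monotonicity is preserved as in (i). Both properties pass to the limit $J^*$, and one more convex/concave composition through (\ref{J_C_relation}) gives convexity of $C$.

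For (iii), I would use an infinitesimal dynamic programming argument. Over $[0, dt]$ no jump occurs with probability $1-Q dt+o(dt)$ and exactly one jump arrives with probability $Q dt+o(dt)$, so the principle of optimality gives
\[
C(s)=e^{-\theta dt}\bigl\{(1-Q dt)\,C(\min\{s+rdt,\bar s\})+Q dt\,\E[M(s,W)]\bigr\}+o(dt),
\]
where $M(s,w):=\min_{u\in[0,\min\{s,w\}]}\{g(w-u)+C(s-u)\}$. For $s<\bar s$ and $dt$ sufficiently small the clipping at $\bar s$ is inactive; expanding to first order in $dt$ using differentiability of $C$, collecting terms, and letting $dt\downarrow 0$ yields (\ref{HJB_eq}), while the minimizer of $M(s,W)$ defines the optimal stationary policy $\mu^*$. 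For the boundary condition, apply the same expansion at $s=\bar s$, where $\min\{\bar s+rdt,\bar s\}=\bar s$, to obtain $(Q+\theta)C(\bar s)=Q\E[M(\bar s, W)]$; substituting this back into (\ref{HJB_eq}) at $s=\bar s$ forces $dC/ds|_{s=\bar s}=0$. The DDE (\ref{C_mu_DDE}) follows by the identical expansion with the inner minimum replaced by evaluation at $u=\mu(s,W)$. The most delicate step is this last part: justifying the interchange of limit and expectation and carefully handling the kink in the storage dynamics at $s=\bar s$ to arrive at the correct boundary condition.
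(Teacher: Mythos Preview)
Your treatment of (ii) and (iii) matches the paper's: the paper also proves (ii) by showing that $T$ preserves convexity through value iteration starting from $J_0\equiv 0$ and then pushes convexity to $C$ via (\ref{J_C_relation}), and it omits (iii) as a standard principle-of-optimality derivation. Your value-iteration induction for weak monotonicity in (i) is also valid, though the paper argues it directly by a sample-path coupling.

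The one genuine gap is in your argument for \emph{strict} monotonicity in (i). Coupling both systems under the \emph{myopic} policy only yields $C_{\mathrm{myopic}}(s_2)<C_{\mathrm{myopic}}(s_1)$. From $C(s_i)\le C_{\mathrm{myopic}}(s_i)$ you obtain $C(s_2)\le C_{\mathrm{myopic}}(s_2)<C_{\mathrm{myopic}}(s_1)$, but this does not imply $C(s_2)<C(s_1)$: ``optimality of $C(s_1)$'' furnishes an \emph{upper} bound $C(s_1)\le C_{\mathrm{myopic}}(s_1)$, not the lower bound you would need. The fix, which is exactly what the paper does, is to couple to the \emph{optimal} control process from $s_1$: let $u^{(1)}_t(\omega)$ be the control realized under an optimal policy starting at $s_1$; this control is admissible from $s_2>s_1$, giving $C(s_2)\le C(s_1)$. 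For strictness, perturb $u^{(1)}$ at the first arrival time $\tau$ with $g\bigl(W_\tau-u^{(1)}_\tau\bigr)>0$ by adding $\delta\in\bigl(0,\min\{s_2-s_1,\,W_\tau-u^{(1)}_\tau\}\bigr]$; by Assumption \ref{assump:stage_cost} such $\tau$ occurs with positive probability in bounded time, and strict monotonicity of $g$ then gives a strictly smaller expected cost from $s_2$, hence $C(s_2)<C(s_1)$.
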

\begin{proof}
See the Appendix.
\end{proof}

The result of Theorem \ref{thm:HJB} part (iii) requires continuous differentiability of the optimal cost function, which can be established under some mild conditions such as differentiability of the stage cost function $g$ and the probability density function $f_W(\cdot)$ of Poisson jumps (cf. Benveniste and Scheinkman \cite{Benveniste79}). Throughout this paper, we assume that $C(s)$ is in fact continuously differentiable and the results of Theorem \ref{thm:HJB} are applicable.

\subsection{Characterizations of the Optimal Policy}\label{sec:optimal_policy}
In this subsection, we derive some structural properties of the optimal policy using the optimal cost characterizations given in Theorems \ref{thm:bellman} and \ref{thm:HJB}. First, we show that the myopic policy of allocating reserve energy from storage to cover as much of every shock as possible is optimal for linear stage cost functions. Then, we partially characterize the structure of optimal policy for strictly convex stage cost functions.

\begin{theorem}\label{thm:linear_policy}
If the stage cost is linear, i.e., $g(x) = \beta x$ for some $\beta>0$, then the myopic policy
\begin{equation}\label{myopic_plicy}
    \mu^*(s,w) = \min\{s, w\},
\end{equation}
is optimal for problem (\ref{value_function}).
\end{theorem}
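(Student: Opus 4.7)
The plan is to use the HJB equation from Theorem \ref{thm:HJB}(iii) to reduce the problem to a pointwise optimization over $u\in[0,\min\{s,w\}]$. With $g(x)=\beta x$ the integrand inside the HJB minimization is $\beta(w-u) + C(s-u)$, whose partial derivative in $u$ is $-\beta - C'(s-u)$. Since $C$ is monotone decreasing by Theorem \ref{thm:HJB}(i), $C'\le 0$; if we can further show that $|C'(s)|\le\beta$ on $[0,\bar s]$, then this partial derivative is non-positive throughout the feasible range of $u$, so the minimum is attained at the upper endpoint $u=\min\{s,w\}$. Theorem \ref{thm:HJB}(iii) then identifies this as the optimal policy. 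Thus the entire problem reduces to proving the Lipschitz bound $|C(s)-C(s')|\le \beta |s-s'|$.

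To establish the Lipschitz bound I would use a coupling argument. Fix $0\le s' < s \le \bar s$, run an ``upper'' system from $s_0=s$ under the optimal policy $\mu^*$, and run a ``lower'' system from $s'_0=s'$ on the \emph{same} realization of the Poisson jump process, using the admissible (history-dependent) clipped policy $u^{\mathrm{low}}_k = \min\bigl\{\mu^*(s_{t_k^-},W_k),\, s'_{t_k^-}\bigr\}$. Let $\Delta_t = s_t - s'_t \in [0,s-s']$. Between shocks both systems ramp at rate $r$ and are capped at $\bar s$, so $\Delta_t$ is non-increasing (it stays constant if both are below the cap, and strictly decreases only when the upper has hit the cap ahead of the lower, which costs nothing). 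At a shock the two cases to check are: if $\mu^*(s_{t_k^-},W_k)\le s'_{t_k^-}$ then both systems execute the same action, no extra cost is incurred and $\Delta$ is unchanged; if $\mu^*(s_{t_k^-},W_k) > s'_{t_k^-}$ then the lower system suffers an extra blackout of size exactly $\mu^*(s_{t_k^-},W_k)-s'_{t_k^-}$, which equals the corresponding drop in $\Delta$. Summing the discounted extra costs and using $e^{-\theta t_k}\le 1$ yields the telescoping bound
\[
C_{\mathrm{clipped}}(s') - C(s) \;\le\; \beta\,\E\!\left[\sum_k e^{-\theta t_k}(\Delta_{t_k^-} - \Delta_{t_k})\right] \;\le\; \beta\,\Delta_0 \;=\; \beta(s-s').
\]
Since $C(s')\le C_{\mathrm{clipped}}(s')$ (optimality of $C$ over all admissible policies), we obtain $C(s')-C(s)\le \beta(s-s')$; combined with monotonicity this gives $|C'(s)|\le\beta$ pointwise (using the standing continuous differentiability assumption).

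The main obstacle is the coupling bookkeeping: one must verify that in every case, both during ramping and at each Poisson arrival, the decrease in $\Delta_t$ dominates $1/\beta$ times the marginal blackout cost incurred by the lower system, handling the cap $\bar s$ and the possibility that $\mu^*(s,w)$ may leave the upper system with positive residual demand. Once the telescoping identity is established and the Lipschitz bound $|C'|\le\beta$ is in hand, the HJB optimization in Theorem \ref{thm:HJB}(iii) collapses to the statement that the integrand is monotonically non-increasing in $u$, so $\mu^*(s,w)=\min\{s,w\}$ is the (possibly non-unique) minimizer, which is exactly the myopic policy.
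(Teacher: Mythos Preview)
Your proof is correct but follows a different route from the paper's. The paper argues by direct sample-path interchange: given any admissible policy $\tilde\mu$ that withholds $\epsilon$ units of storage at some first arrival time $\tau_1$, it incurs an extra stage cost $\beta\epsilon e^{-\theta\tau_1}$ at that instant, while the $\epsilon$ of retained reserve can recoup at most $\beta\epsilon e^{-\theta\tau_2}$ at some later $\tau_2>\tau_1$; linearity plus strict discounting then make the myopic policy weakly dominant on every sample path. No properties of $C$ or the HJB equation are invoked.

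Your argument instead proves a structural fact about the value function, the Lipschitz bound $|C(s)-C(s')|\le\beta|s-s'|$, via coupling, and then feeds this into the pointwise HJB/Bellman minimization to collapse it to the upper endpoint. The coupling bookkeeping you describe is sound: the key identities $\Delta_{t_k^-}-\Delta_{t_k}=\mu^*(s_{t_k^-},W_k)-s'_{t_k^-}$ in the clipped case and the non-increase of $\Delta_t$ between arrivals (with the cap only accelerating the decrease) give exactly the telescoping bound you claim, and the history-dependent clipped policy is admissible for bounding $C(s')$ by standard discounted-MDP theory. One small point: to conclude optimality of the myopic policy you should appeal to Theorem~\ref{thm:bellman} (which gives the ``if and only if'' for minimizers of the Bellman operator) rather than Theorem~\ref{thm:HJB}(iii) (which as stated only asserts the forward implication); the two are equivalent via \eqref{J_C_relation}, so this is cosmetic.

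What each approach buys: the paper's exchange argument is shorter and avoids any smoothness assumption on $C$, but is somewhat informal about how the saved $\epsilon$ propagates through the cap and multiple future arrivals. Your coupling makes that propagation explicit and, as a by-product, yields the quantitative bound $|C'(s)|\le\beta$ (the marginal value of storage never exceeds the marginal blackout cost), which is of independent interest and parallels Lemma~\ref{lemma:dC_bound}.
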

\begin{proof}
See the Appendix.
\end{proof}

Next, we focus on nonlinear but convex stage cost functions. In this case, the myopic policy defined in (\ref{myopic_plicy}) is no longer optimal. Intuitively, the myopic policy greedily consumes the reserve and thereby increases the chance of a large blackout. In the linear stage cost case, the penalty for a large blackout is equivalent to the total penalty of many small blackouts. This is contrary to the strictly convex case. Therefore, the optimal policy in this case tends to be more conservative in consuming the reserve. Nevertheless, the structure of the optimal policy shows some similarities with the myopic policy. In the following we present some characterizations of the structural properties of the optimal policy using the results from Section \ref{sec:optimal_cost}.

\begin{assumption}\label{assump:neg_drift}
The storage process has a positive drift in the sense that the rate of the compound Poisson process is less than the ramp constraint, i.e.,
\[{Q \E[W]} \leq r.\]
\end{assumption}

\begin{theorem}\label{thm:policy_monotonicity}
    Let $\mu^*(s,w)$ be the optimal policy associated with problem \eqref{value_function}. If Assumption \ref{assump:neg_drift} holds, then $\mu^*(s,w)$ is monotonically nondecreasing in both $s$ and $w$.
\end{theorem}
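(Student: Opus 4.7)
The plan is to read off $\mu^*(s,w)$ from the parametric minimization in the HJB equation \eqref{HJB_eq}, namely
\[
\mu^*(s,w)\in\arg\min_{u\in[0,\min\{s,w\}]}\bigl\{g(w-u)+C(s-u)\bigr\},
\]
and to obtain monotonicity by a Topkis / submodularity argument on this pointwise optimization. Since the subsection treats (strictly) convex stage costs, $g$ is convex; Theorem~\ref{thm:HJB}(ii) then hands us convexity of the value function $C$ on $[0,\bar s]$, and continuous differentiability of $C$ has already been imposed immediately after Theorem~\ref{thm:HJB}.

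Write $F(u;s,w):=g(w-u)+C(s-u)$. A direct differentiation gives
\[
\tfrac{\partial^2 F}{\partial u\,\partial s}=-C''(s-u)\le 0,\qquad \tfrac{\partial^2 F}{\partial u\,\partial w}=-g''(w-u)\le 0,
\]
so $F$ is submodular in each of the pairs $(u,s)$ and $(u,w)$, and the feasible interval $[0,\min\{s,w\}]$ is ascending in $s$ and in $w$. By Topkis' monotone-selection theorem, the smallest (equivalently, the largest) minimizer of $F(\cdot;s,w)$ over the feasible set is nondecreasing in each coordinate, which yields the claim on taking $\mu^*$ to be such a monotone selection. If one prefers to bypass Topkis as a black box, the same conclusion follows from a direct interchange argument: suppose $s_1<s_2$ with $u_i:=\mu^*(s_i,w)$ and $u_1>u_2$; then $u_1$ is feasible at $(s_2,w)$ and $u_2$ is feasible at $(s_1,w)$, so adding the two optimality inequalities gives $C(s_1-u_1)+C(s_2-u_2)\le C(s_1-u_2)+C(s_2-u_1)$, which contradicts the fact that for convex $C$ the decrease of $C$ across an interval of fixed length is nonincreasing in position. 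The symmetric argument using convexity of $g$ delivers monotonicity in $w$.

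The step I expect to require the most care is pinning down how Assumption~\ref{assump:neg_drift} actually enters, since the submodularity reasoning above uses only convexity of $g$ and of $C$ and appears indifferent to the drift condition. My working guess is that the positive-drift hypothesis $Q\,\E[W]\le r$ is needed to upgrade the regularity of $C$ uniformly up to the reflecting boundary $s=\bar s$ — ensuring that the second-derivative computation for $F$ is valid on all of $[0,\bar s]$ and that a monotone selection of $\mu^*$ can be chosen pointwise rather than merely almost everywhere. An alternative possibility is that drift is used to guarantee that the unconstrained minimizer lies in the interior of $[0,\min\{s,w\}]$ for generic $(s,w)$, so that the comparative statics is not polluted by transitions in which of the side constraints $u\le s$ or $u\le w$ is active. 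Either way, the genuinely mathematical content — coordinatewise monotonicity once convexity of $g$ and $C$ is in hand — is essentially standard; the delicate part is hooking it up cleanly to the drift hypothesis that the theorem actually invokes.
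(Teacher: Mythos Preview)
Your submodularity/interchange argument is correct, and it is \emph{not} the route the paper takes. The paper first establishes the one-dimensional kernel representation $\mu^*(s,w)=[\,w-\phi(s-w)\,]^+$ (Theorem~\ref{thm:collapse}), then proves a two-sided Lipschitz-type monotonicity for the kernel, $-(p_2-p_1)\le\phi(p_2)-\phi(p_1)\le 0$ (Lemma~\ref{lemma:phi_monotonicity}), and reads off monotonicity of $\mu^*$ in $s$ and $w$ from those two inequalities. Your approach instead works directly on the pointwise convex program $\min_{u\in[0,\min\{s,w\}]}\{g(w-u)+C(s-u)\}$ and uses only convexity of $g$ and of $C$ (the latter from Theorem~\ref{thm:HJB}(ii)); the interchange version you sketch does not even need the cross-partials to exist, so the twice-differentiability issue with $C$ is moot. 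In the strictly convex setting that the paper is treating in this subsection the minimizer is unique and your contradiction is clean; in the merely convex case your Topkis comment about choosing the smallest (or largest) selection is exactly right.

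On the role of Assumption~\ref{assump:neg_drift}: your instinct that it looks superfluous for the submodularity argument is correct, and neither of your two speculative explanations matches what the paper does. In the paper, the drift hypothesis enters only because the proof passes through Theorem~\ref{thm:collapse}, whose kernel partition \eqref{kernel_partition} relies on Lemma~\ref{lemma:inactive_const} (the upper constraint $x\le\min\{B,\bar s-p\}$ in \eqref{phi_def} is never active), and that lemma is where $Q\,\E[W]\le r$ is used via Lemma~\ref{lemma:dC_bound}. Your route bypasses that machinery entirely, so the drift condition plays no role in your argument---which is a genuine gain, since it shows the monotonicity of (a suitable selection of) $\mu^*$ holds under convexity of $g$ alone. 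The paper's route, on the other hand, buys the explicit kernel structure of $\mu^*$, which is used elsewhere (e.g., the policy partitions \eqref{policy_partition1}--\eqref{policy_partition2} and the numerical work), so the detour is not wasted.
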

\begin{proof}
See the Appendix.
\end{proof}

\begin{theorem}\label{thm:collapse}
Let $\mu^*$ denote the optimal policy associated with problem (\ref{value_function}) with strictly convex stage cost $g(\cdot)$. There exist a unique kernel function $\phi: [-B,\bar s] \rightarrow \R$ such that
\begin{equation}\label{collapse_eq}
\hspace{-0.02in}   \mu^*(s,w)\hspace{-0.02in} =\hspace{-0.02in} \Big[w - \phi(s-w)\Big]^+, \hspace{-0.05in}\quad \forall (s,w) \in [0,\bar s]\times [0,B],
\end{equation}
where,
\vspace{-0.1in}
\begin{align}
&
\begin{array}
[c]{cccl}%
\phi\left(  p\right)   & = & \arg\min\limits_{x} & g\left(  x\right)
+C\left(  x+p\right)
\end{array}
\label{phi_def}\\
&
\begin{array}
[c]{cccl}
& \hspace{0.65in} & \emph{s.t.} & x\leq\min\left\{  B,\bar{s}-p\right\}
\vspace{0.07in}\\
& \hspace{0.65in} &  & x\geq\max\left\{  0,-p\right\}
\end{array}
\nonumber
\end{align}

\noindent Moreover, under Assumption \ref{assump:neg_drift}, we can represent the kernel function $\phi(p)$ as follows:
\begin{equation}\label{kernel_partition}
    \phi(p) = \left\{
                \begin{array}{ll}
                  -p, & \hbox{$-B \leq p \leq b_0$} \\[0.05in]
                  \phi^\circ(p), & \hbox{$\ \ b_0 \leq p \leq b_1$} \\[0.05in]
                  0, & \hbox{$\ \ b_1 \leq p \leq \bar s$,}
                \end{array}
              \right.
\end{equation}
where $\phi^\circ(p)$ is the unique solution of
\begin{equation}\label{interior_eq}
    g'(x)  + C'(x+p) = 0,
\end{equation}
and $b_0$ and $b_1$ are the break-points, where
\begin{multline}\label{b0}
    b_0 = -(g')^{(-1)}\Big(C'(0)\Big)\\
    \geq -(g')^{(-1)}\Big(\frac{Q}{r}\E[g(W)]\Big) \geq -B,
\end{multline}
\begin{equation}\label{b1}
    b_1 = -(C')^{(-1)}\Big(g'(0)\Big) \leq \bar s.
\end{equation}
\end{theorem}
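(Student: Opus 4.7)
The plan is to start from the dynamic programming characterization of the optimal policy in Theorem~\ref{thm:HJB}(iii): at any state $(s,w)$,
\[
\mu^*(s,w)\in\arg\min_{u\in[0,\min\{s,w\}]}\bigl\{g(w-u)+C(s-u)\bigr\}.
\]
The key change of variables is $x=w-u$ (the post-control blackout size) together with $p=s-w$. Under this substitution the objective becomes $g(x)+C(x+p)$, depending on $(s,w)$ only through the single variable $p$, while the constraint $0\le u\le\min\{s,w\}$ translates to $\max\{0,-p\}\le x\le w$. This reformulation is precisely what ``collapses'' the two-dimensional policy into a one-dimensional kernel.

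Next, I would define $\phi(p)$ as the minimizer of $g(x)+C(x+p)$ over the relaxed feasible set $\max\{0,-p\}\le x\le\min\{B,\bar s-p\}$ of \eqref{phi_def}. Strict convexity of $g$ combined with convexity of $C$ from Theorem~\ref{thm:HJB}(ii) makes the objective strictly convex in $x$, so $\phi(p)$ exists and is unique for every $p\in[-B,\bar s]$. To connect $\phi(p)$ back to $\mu^*(s,w)$, I would split into two cases. If $\phi(p)\le w$, the relaxed minimizer is still feasible under the tighter bound $x\le w$ and hence remains optimal, giving $\mu^*(s,w)=w-\phi(p)\ge 0$. If $\phi(p)>w$, strict convexity forces $g(x)+C(x+p)$ to be strictly decreasing on $[\max\{0,-p\},\phi(p)]$, so the constrained minimum is at $x=w$, giving $\mu^*(s,w)=0$. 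The two cases combine as $\mu^*(s,w)=[w-\phi(p)]^+$, which is exactly \eqref{collapse_eq}.

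For the partition \eqref{kernel_partition} under Assumption~\ref{assump:neg_drift}, examine the first-order condition. Since $g$ is strictly increasing so $g'>0$, and $C$ is strictly decreasing (Theorem~\ref{thm:HJB}(i)) so $C'\le 0$, and both derivatives are strictly monotone, the interior stationarity equation $g'(x)+C'(x+p)=0$ has at most one root $\phi^\circ(p)$. The kernel sits on the lower boundary $\phi(p)=-p$ exactly when the objective's derivative at $x=-p$ is nonnegative, $g'(-p)+C'(0)\ge 0$, which by monotonicity of $g'$ is equivalent to $p\le b_0$; it sits on the upper boundary $\phi(p)=0$ exactly when $g'(0)+C'(p)\ge 0$, which by monotonicity of $C'$ is equivalent to $p\ge b_1$; otherwise $\phi(p)=\phi^\circ(p)$. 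One should also verify that the other feasibility boundary $x=\min\{B,\bar s-p\}$ is never active, which follows from $C'(\bar s)=0$ together with $g'>0$ making the derivative of the objective strictly positive at $x=\bar s-p$.

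Finally, to justify the inequalities in \eqref{b0} and \eqref{b1}, I would evaluate the HJB equation \eqref{HJB_eq} at $s=0$: the feasible set for $u$ collapses to $\{0\}$, so the inner minimization reduces to $g(W)+C(0)$, yielding the closed-form
\[
C'(0)=\frac{\theta}{r}C(0)-\frac{Q}{r}\E[g(W)].
\]
Combined with $C(0)\ge 0$, this bounds $-C'(0)$ by $(Q/r)\E[g(W)]$, which by monotonicity of $(g')^{-1}$ translates into the stated lower estimate for $b_0$; the further bound $b_0\ge -B$ merely reflects the domain $[-B,\bar s]$ of $p$. The bound $b_1\le\bar s$ follows immediately from the boundary condition $C'(\bar s)=0$ and the defining characterization of $b_1$. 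The step I expect to require the most care is the case analysis tying the relaxed minimizer $\phi(p)$ back to the original constrained problem at its feasibility boundaries; Assumption~\ref{assump:neg_drift} is what guarantees the break-points $b_0,b_1$ actually fall inside $[-B,\bar s]$ so that the three-piece decomposition is consistent.
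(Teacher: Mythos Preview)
Your approach is essentially identical to the paper's: the same change of variables $x=w-u$, $p=s-w$, the same relaxation of $x\le w$ to $x\le\min\{B,\bar s-p\}$, the same case split to recover $\mu^*(s,w)=[w-\phi(s-w)]^+$, and the same first-order analysis for the three-piece partition.

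There is one genuine gap. Your verification that the upper constraint $x\le\min\{B,\bar s-p\}$ is never active only treats the case $\bar s-p\le B$, where indeed $g'(\bar s-p)+C'(\bar s)=g'(\bar s-p)\ge 0$. You have not handled the case $\bar s-p>B$, where the binding constraint would be $x=B$ and you need $g'(B)+C'(B+p)\ge 0$. This does \emph{not} follow from the boundary condition alone; it requires the global derivative bound $C'(s)\ge -(Q/r)\E[g(W)]$ (which is exactly the inequality you derived at $s=0$ from the HJB equation, extended to all $s$ by convexity of $C$) together with Assumption~\ref{assump:neg_drift} and convexity of $g$ to conclude
\[
\frac{Q}{r}\E[g(W)]\le\frac{\E[g(W)]}{\E[W]}\le g'(B).
\]
This is precisely where Assumption~\ref{assump:neg_drift} enters, not merely to keep the break-points inside $[-B,\bar s]$ as you suggest. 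The same chain of inequalities is also what justifies the second inequality in \eqref{b0}, which you dismissed as ``merely reflecting the domain''; in fact $-(g')^{-1}\bigl((Q/r)\E[g(W)]\bigr)\ge -B$ is a nontrivial consequence of Assumption~\ref{assump:neg_drift} and convexity, not a tautology.
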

\begin{proof}
See Appendix.
\end{proof}

Theorem \ref{thm:collapse} demonstrates a very special structure for the optimal policy. In fact, it shows that the two dimensional policy can be represented using a single dimensional kernel function. This result allows us to significantly reduce the computational complexity of numerical methods for computing the optimal policy. In addition, using Theorem \ref{thm:collapse}, we can provide a qualitative picture of the structure of the optimal policy.  Figures \ref{fig:kernel_concept} and \ref{fig:policy_concept} illustrate a conceptual plot of the kernel function, and the optimal policy, respectively.

\begin{figure}[htbp]
\centering
  \includegraphics[width=2.5in]{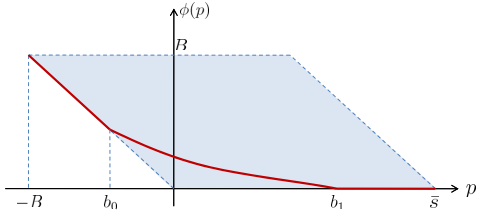}
  \caption{Structure of the kernel function $\phi(p)$ defined in (\ref{phi_def}).}\label{fig:kernel_concept}
\end{figure}

\begin{figure}[htbp]
\centering
  \includegraphics[width=2.5in]{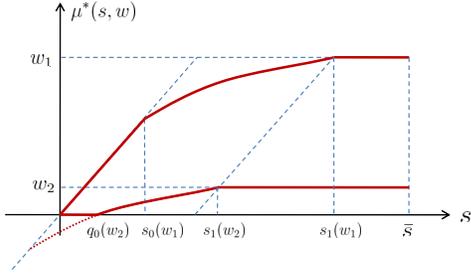}
  \caption{Structure of the optimal policy $\mu^*(s,w)$ for a convex stage cost, for $w = w_1, w_2$.}\label{fig:policy_concept}
  \vspace{-0.2in}
\end{figure}

In particular, we can summarize the characterization of the optimal policy as follows. If $w \geq -b_0$, we have
\begin{equation}\label{policy_partition1}
    \mu^*(s,w) = \left\{
                \begin{array}{ll}
                  s, & \hbox{$\qquad 0 \leq s \leq s_0(w)$} \\
                  w - \phi^\circ(s-w), & \hbox{$s_0(w) \leq s \leq s_1(w)$} \\
                  w, & \hbox{$s_1(w) \leq s \leq \bar s$,}
                \end{array}
              \right.
\end{equation}
where $s_i(w) = w+b_i$ for $i=0,1$. In the case where  $w \leq -b_0$, we have
\begin{equation}\label{policy_partition2}
    \mu^*(s,w) = \left\{
                \begin{array}{ll}
                  0, & \hbox{$\qquad 0 \leq s \leq q_0(w)$} \\
                  w - \phi^\circ(s-w), & \hbox{$q_0(w) \leq s \leq s_1(w)$} \\
                  w, & \hbox{$s_1(w) \leq s \leq \bar s$,}
                \end{array}
              \right.
\end{equation}
where $q_0(w)$ is the unique solution of $\phi^\circ(s-w) = w$.

\section{Numerical Simulations}\label{sec:numsim}
In this part, we present numerical characterizations of the optimal cost function and optimal policy in different scenarios. Moreover, we study the effect of storage size and volatility on system performance, for various control policies.

We use the value iteration algorithm (\ref{value_iteration}) to compute the optimal policy and cost function for nonlinear stage costs. Figures \ref{fig:policy_numerical} and \ref{fig:cost_numerical} illustrate the optimal policy and cost function in a scenario with uniformly distributed random jumps, quadratic stage cost, and the following parameters: $\theta = 0.1, r = 1, Q = 0.8, \bar s = 2$. Observe that the optimal policy complies with the conceptual Figure \ref{fig:policy_concept}.


\begin{figure}[h]
\centering
  \includegraphics[width=2.5in]{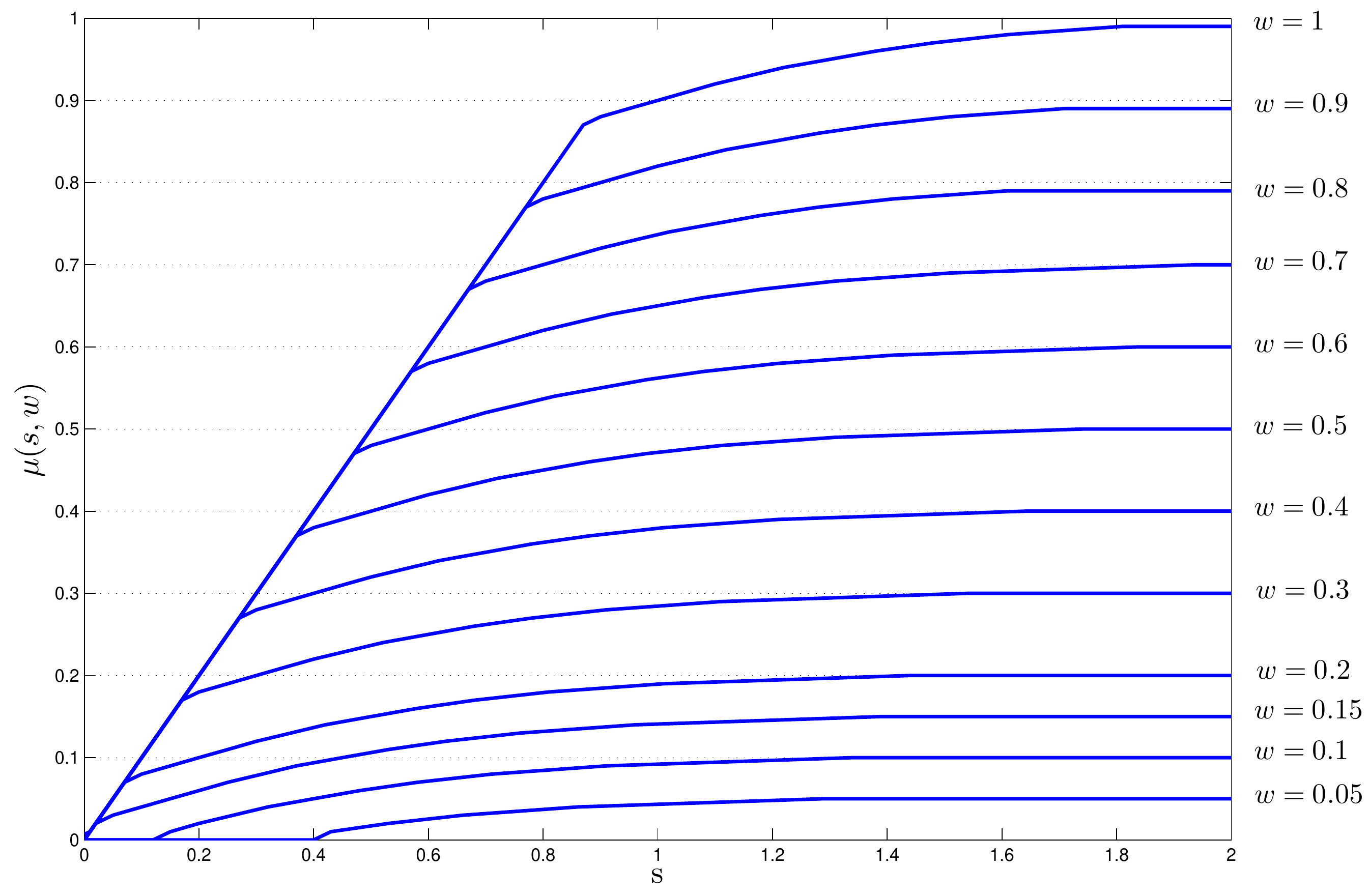}
  \caption{Optimal policy computed by value iteration algorithm (\ref{value_iteration}) for quadratic stage cost and uniform shock distribution.}\label{fig:policy_numerical}
\end{figure}

\begin{figure}[h]
\centering
  \includegraphics[width=2.2in]{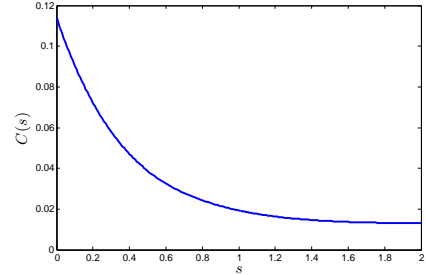}
  \caption{Optimal cost function computed by value iteration algorithm (\ref{value_iteration}) for quadratic stage cost and uniform shock distribution.}\label{fig:cost_numerical}
\end{figure}

\begin{figure}[h]
\centering
  \includegraphics[width=2.5in]{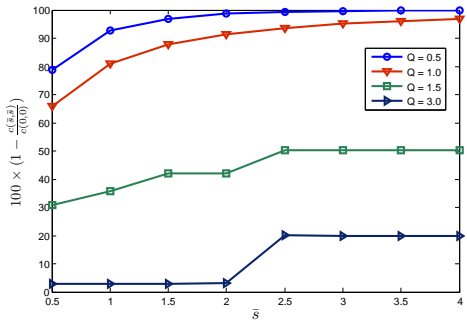}
  \caption{Value of energy storage as a function of the storage capacity for different Poisson arrival rates. $c(s; \bar s)$ denotes the optimal cost function (\ref{value_function}) when the storage capacity is given by $\bar s$.}\label{fig:s_bar_effect}
  \vspace{-.1in}
\end{figure}

Figure \ref{fig:s_bar_effect} shows the value of storage, defined as the normalized improvement of energy storage in expected cost, for different Poisson arrival rates. In this case $\theta = 0.01, g(x) = x^3, r = 1, W = 1$. Note that the storage process has a  negative drift if and only if $Q > 1$. Observe that in the positive or zero drift cases, even a small value of storage yields a significant effect in reducing the blackout cost. However, in the negative drift case, the value of storage is significantly lower. Observe that for the negative drift case, there is a critical storage size that yields a sharp improvement in the value of storage.

\vspace{-.1in}
\subsection{Blackout Statistics}\label{sec:BO_stat}
 We discussed in Section \ref{sec:optimal_policy} that the myopic policy given by (\ref{myopic_plicy}) is not necessary optimal for nonlinear stage cost functions. In this part, we study the effect of different optimal policies, in the sense of (\ref{eq:main}), for different stage costs on the distribution of large blackouts. Figure \ref{fig:BO_dist} shows the blackout distribution in a scenario with deterministic jumps of size one, for both myopic policy and the optimal policy for a cubic cost function. Note that, the stage cost for the non-myopic policy assigns a significantly higher weight to larger blackouts. Therefore, as we can see in Figure \ref{fig:BO_dist}, the non-myopic policy results in less frequent large blackouts at the price of more frequent small blackouts.

\begin{figure}[h]
\centering
   \vspace{-.1in}
  \includegraphics[width=2.75in]{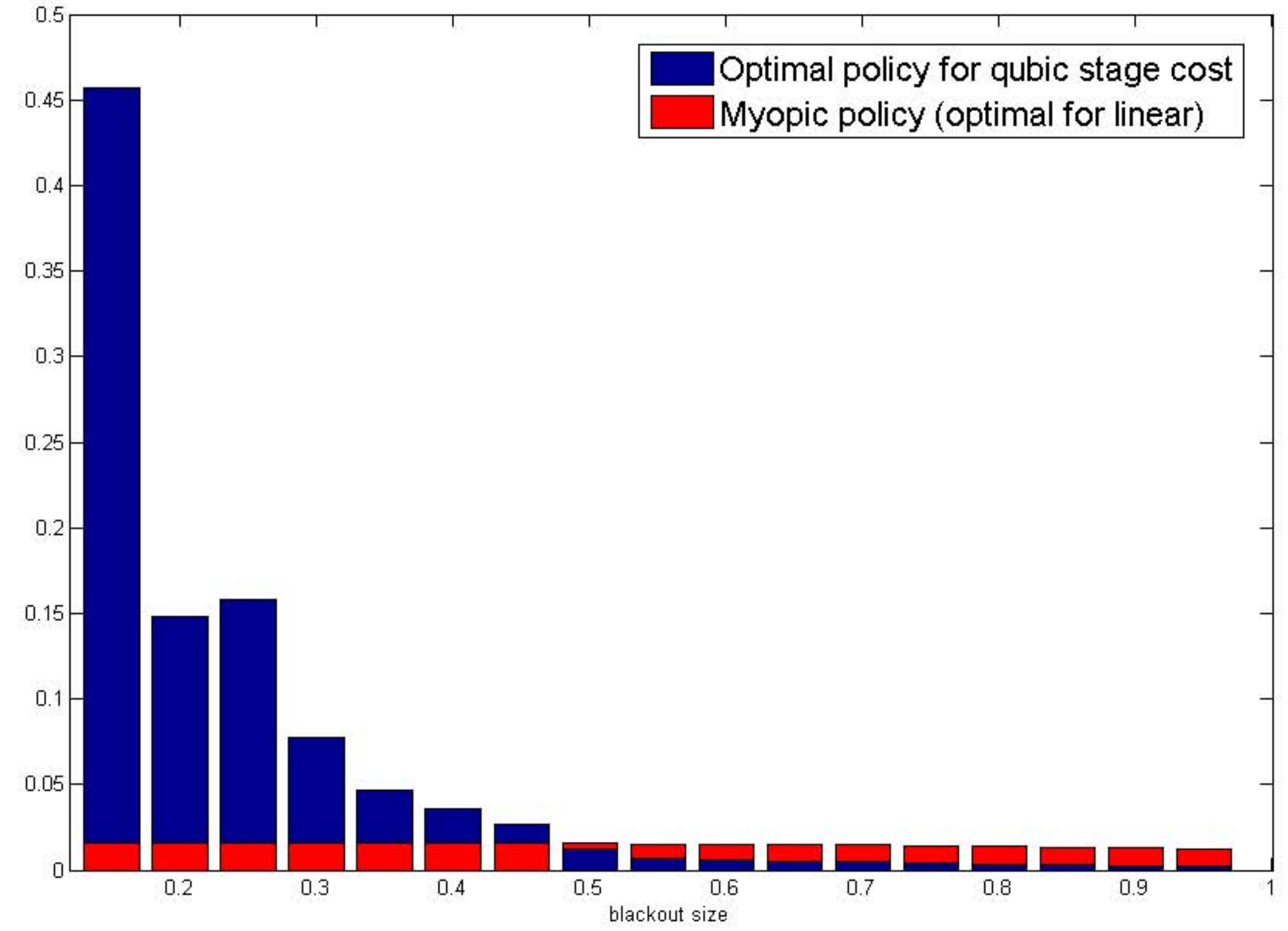}
  \caption{Blackout distribution comparison of myopic and non-myopic policies (deterministic jumps with rate $Q = 0.8$).}\label{fig:BO_dist}
    \vspace{-.2in}
\end{figure}

Next, we study the effect of storage size on probability of large blackouts. Figure \ref{fig:BO_tail_s_bar} plots this metric for different policies that are all optimal for different stage cost functions. Similarly to Figure \ref{fig:s_bar_effect}, we observe a sharp improvement of the reliability metric at a critical storage size. It is worth mentioning that given a target reliability metric,  the storage size required by the optimal policy with cubic stage cost is about half of what is required by the myopic policy.

\begin{figure}[h]
\centering
  \includegraphics[width=2.5in]{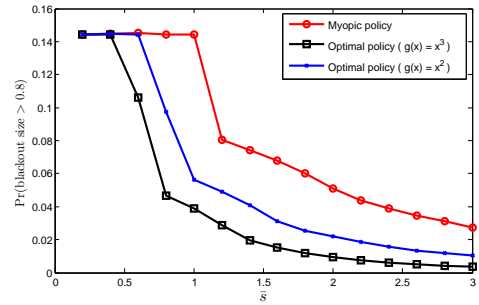}
  \caption{Probability of large blackouts as a function of storage size for different policies (deterministic jumps with rate $Q = 1.0$). }\label{fig:BO_tail_s_bar}
\end{figure}

Finally, we compare the reliability of myopic and non-myopic policies in terms of probability of large blackouts as a function of the volatility of the demand/supply process. We define volatility as the energy of the shock process, i.e.,
$$\textrm{volatility} = Q \E[W^2],$$
which depends both on the mean and variance of the compound arrival process. Figure \ref{fig:BO_tail_volatility} demonstrates large blackout probabilities as a function of volatility, for a system with uniformly distributed jumps with constant mean $R\E[W] = 1$. As shown in Figure \ref{fig:BO_tail_volatility}, higher volatility increases the probability of large blackouts in an almost linear fashion.

\begin{figure}[h]
\centering
  \includegraphics[width=2.5in]{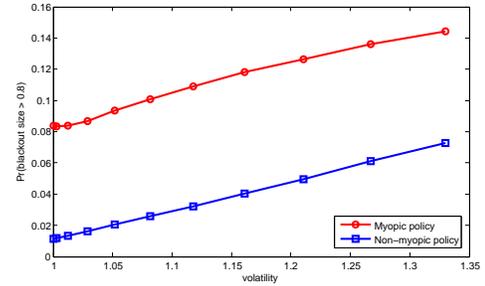}
  \caption{Probability of large blackouts vs. volatility for different policies (uniformly distributed random jumps with $Q = 1.0$ and $E[W] = 1$).}\label{fig:BO_tail_volatility}
  \vspace{-.2in}
\end{figure}

\section{conclusions}\label{sec:conclusions}

We examined the reliability value of storage in a power supply network with uncertainty in supply/demand and upward ramp constraints on both supply and storage. The uncertainty was modeled as a compound poisson arrival of energy deficit shocks. We formulated the problem of optimal control of storage for maximizing system reliability as minimization over all stationary Markovian control policies, of the infinite horizon expected discounted cost of blackouts. We showed that for a linear stage cost, a \emph{myopic} policy which uses storage to compensate for all shocks regardless of their size is optimal. However, for strictly convex stage costs the myopic policy is not optimal. Our results suggest that for high ratios of the average rate of shock size to storage ramp rate, there is a critical level of storage size above which, the value of additional capacity quickly diminishes. For ratios around three and above, there seems to be another critical level below which, storage capacity provides very little value. Finally, Our results suggest that for all control policies, there seems to be a critical level of storage size, above which the probability of suffering large blackouts diminishes quickly.

\appendix

{\bf Proof of Theorem \ref{thm:HJB}}:
\emph{Part (i):} The monotonicity property of the value function follows almost immediately from the definition. Let $0 \leq s_1 < s_2 \leq \bar s$, and assume  $C(s) = C_{\mu}(s)$ for some policy $\mu$. Given the initial state $s_1$, let $u^{(1)}_t$ be the control process under policy $\mu$. Note that for every realization $\omega$ of the compound Poisson process, the sample path $u^{(1)}_t(\omega)$ is admissible for initial condition $s_2 > s_1$. Therefore, by definitions (\ref{discounted_cost}) and (\ref{value_function}), we have $C(s_2) \leq C(s_1)$.

In order to show the strict monotonicity, consider the controlled process starting from $s_1$. Let $\tau$ be the first arrival time such that $g( W_\tau - u^{(1)}_\tau) > 0$. By Assumption \ref{assump:stage_cost}, we have $\pr (\tau \in [0,T]) > 0$ for some $T < \infty$. For every sample path $\omega$, define the control process
$$u^{(2)}_t(\omega) = u^{(1)}_t(\omega) + \delta \cdot \mathbb I_{\{t = \tau(\omega)\}},$$
for some $\delta > 0$ such that $\delta \leq \min\{s_2-s_1, W_{\tau(\omega)} - u^{(1)}_{\tau(\omega)}\}$.

It is clear that $u^{(2)}_t(\omega)$ is admissible for the controlled process starting from $s_2$. Using the definition of the expected cost function in (\ref{discounted_cost}), we can write
\begin{eqnarray*}
C(s_1) - C(s_2) &=& \E_\omega[e^{-\theta \tau(\omega)}g(W_{\tau(\omega)} - u^{(1)}_{\tau(\omega)}) \\
&& - e^{-\theta \tau(\omega)} g(W_{\tau(\omega)} - u^{(1)}_{\tau(\omega)} - \delta)] \\
&\geq& \E[\e e^{-\theta \tau(\omega)}], \quad \textrm{for some } \e > 0 \\
& \geq& \e e^{-\theta T} \pr (\tau \in [0, T])  > 0,
\end{eqnarray*}
where the first inequality holds by strict monotonicity of $g$.


\emph{Part (ii):}
We first prove convexity of $J^*(s,w)$ defined in Theorem \ref{thm:bellman}, and use it to establish convexity of $C(s)$.

In order to show convexity of $J^*(s,w)$, we need to show that the operator $T$ defined in (\ref{bellman_eq}) preserves convexity. Then the claim would be immediate using the convergence of value iteration algorithm (\ref{value_iteration}) to optimal cost $J^*$, where the initial condition is an arbitrary convex function such as $J_0 = 0$.

Next we show that the operator $T$ preserves convexity for this particular problem. Define the objective function in (\ref{bellman_eq}) as $Q(s,w,u)$. We have
\begin{eqnarray*}
\hspace{-3pt}  Q(s,w,u) \hspace{-9pt} &=& \hspace{-9pt} g(w - u) + \E\Big[e^{-\theta t_0} J\big(\min\{s - u + r t_0, \bar s\}, W\big)  \Big] \\
&=& \hspace{-8pt} g(w - u) + \int_{ \frac{\bar s - s +u}{r}}^\infty e^{-\theta t_0} \E[J\big(\bar s, W\big)] Re^{-Q t_0} dt_0 \\
&& \hspace{-18pt} +\ \int_0^{ \frac{\bar s - s +u}{r}} e^{-\theta t_0} \E[J\big(s - u + r t_0, W\big)] Re^{-Q t_0} dt_0.
\end{eqnarray*}

Using the fact that $J$ is convex, linearity of expectation and basic definition of a convex function, it is straightforward but tedious to show that $Q(s,w,u)$ is a convex function. We omit the details for brevity. Given the convexity of $Q$, the convexity of $(TJ)(s,w)$ is immediate, since we are minimizing a multidimensional convex function over one of its dimensions. Hence, we have established convexity of $J^*(s,w)$ in $(s,w)$. Finally, we can express $C(s)$ in terms of $J^*(s,w)$ as in (\ref{J_C_relation}). This results in convexity of $C(s)$ using the above argument for proving convexity of $Q(s,w,u)$.


\emph{Part (iii):} The derivation of Hamilton-Jacobi-Bellman is relatively standard. We omit the proof for brevity, and present a proof sketch based on \emph{principle of optimality} in \cite{ACC_report}. For a more detailed treatment, please refer to \cite{DP_book}, \cite{Fleming} and \cite{Elliott}.

{\bf Proof of Theorem \ref{thm:linear_policy}}:
We establish optimality of $\mu^*$ by showing that it achieves an expected cost no higher than any other admissible policy.
Consider an admissible policy $\tilde \mu$ such that $\tilde\mu(s,w) < \min\{s,w\}$ for some $(s,w) \in [0,\bar s] \times [0,B]$. For every sample path of the controlled process, let $\tau_1(\omega)$ be the first Poisson arrival time such that
$$\min\{s_{\tau_1^-}, W_{\tau_1}\} - \tilde \mu(s_{\tau_1^-}, W_{\tau_1}) = \e > 0.$$

Therefore, by applying policy $\tilde \mu$ instead of $\mu^*$, we pay an extra penalty of $\beta \e e^{-\theta \tau_1(\omega)}$. The reward for this extra penalty is that the state process is now biased by at most $\e$, which allows us to avoid later penalties. However, since the stage cost is linear, the penalty reduction by this bias for any time $\tau_2(\omega) > \tau_1(\omega)$ is at most $\beta \e e^{-\theta \tau_2(\omega)}$. Hence, for this sample path $\omega$, the policy $\tilde \mu$ does worse than the myopic policy $\mu^*$ at least by
$\beta \e (e^{-\theta \tau_1(\omega)} -  e^{-\theta \tau_2(\omega)}) > 0.$
Therefore, by taking the expectation for all sample paths, the myopic policy cannot do worse than any other admissible policy. Note that this argument does not prove the uniqueness of $\mu^*$ as the optimal policy. In fact, we may construct optimal policies that are different from $\mu^*$ on a set $A \subseteq [0,\bar s] \times [0,B]$, where $\pr( (s_{t^-}, W_t) \in A ) = 0$.
\bqed

We delay the proof of Theorem \ref{thm:policy_monotonicity} until after proof of Theorem \ref{thm:collapse}. Let us start with some useful lemmas on the structure of the kernel function.

\begin{lemma}\label{lemma:phi_sticky}
Let $\phi(p)$ be defined as in (\ref{phi_def}). We have
\begin{enumerate}
  \item If $\phi(p_0) = - p_0$ for some $p_0$, then
      $$\phi(p) = -p, \quad \foral p \leq p_0.$$
  \item If $\phi(p_1) = 0$ for some $p_1$, then
      $$\phi(p) = 0, \quad \foral p \geq p_1.$$
\end{enumerate}
\end{lemma}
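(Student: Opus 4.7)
The plan is to reduce the lemma to a first-order optimality condition for the one-dimensional convex program defining $\phi(p)$. Set $F_p(x) := g(x) + C(x+p)$. By the strict convexity of $g$ assumed in Theorem \ref{thm:collapse} (together with Assumption \ref{assump:stage_cost}) and by convexity of $C$ from Theorem \ref{thm:HJB}(ii), $F_p$ is convex in $x$ for every fixed $p$. The feasible set in (\ref{phi_def}) is the interval $[\ell(p), u(p)]$ with $\ell(p) = \max\{0, -p\}$ and $u(p) = \min\{B, \bar s - p\}$, so $\phi(p)$ is characterized by the standard first-order condition for a convex minimization over an interval.

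For the first claim, suppose $\phi(p_0) = -p_0$. This forces $-p_0 \geq 0$ (so $p_0 \leq 0$), the lower bound $\ell(p_0) = -p_0$ is active, and first-order optimality gives
\[
F_{p_0}'(-p_0) \;=\; g'(-p_0) + C'(0) \;\geq\; 0.
\]
Now fix any $p \leq p_0$. The new lower bound is $\ell(p) = -p \geq -p_0$, and convexity of $g$ yields $g'(-p) \geq g'(-p_0)$, so
\[
F_p'(\ell(p)) \;=\; g'(-p) + C'(0) \;\geq\; g'(-p_0) + C'(0) \;\geq\; 0.
\]
Combined with convexity of $F_p$, this forces the minimum on $[\ell(p), u(p)]$ to be attained at the left endpoint, i.e., $\phi(p) = -p$.

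The second claim follows from the dual argument, exchanging the roles of $g'$ and $C'$. If $\phi(p_1) = 0$, then $\ell(p_1) = 0$, which requires $p_1 \geq 0$, and first-order optimality at $x = 0$ gives $g'(0) + C'(p_1) \geq 0$. For $p \geq p_1 \geq 0$ we still have $\ell(p) = 0$, and convexity of $C$ gives $C'(p) \geq C'(p_1)$, hence
\[
F_p'(0) \;=\; g'(0) + C'(p) \;\geq\; g'(0) + C'(p_1) \;\geq\; 0.
\]
Convexity of $F_p$ again forces the minimizer on $[0, u(p)]$ to be at $x = 0$, so $\phi(p) = 0$.

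The only point that requires care is tracking the moving feasible interval: as $p$ decreases in the first case (resp.\ increases in the second), the active endpoint shifts, and one must confirm that the first-order inequality inherited from $p_0$ (resp.\ $p_1$) survives the shift. This is exactly where monotonicity of $g'$ (resp.\ $C'$) coming from convexity is used, and no further structure of the dynamics is invoked; the upper bound $u(p)$ plays no role since optimality is certified at the lower endpoint.
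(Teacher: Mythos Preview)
Your proof is correct and follows essentially the same approach as the paper: both arguments identify $\phi(p)$ as the minimizer of the convex function $F_p(x)=g(x)+C(x+p)$ over an interval, extract the first-order inequality at the active lower endpoint ($g'(-p_0)+C'(0)\ge 0$ in part 1, $g'(0)+C'(p_1)\ge 0$ in part 2), and then use monotonicity of $g'$ (resp.\ $C'$) to propagate this inequality to all smaller (resp.\ larger) $p$. The only cosmetic difference is that the paper verifies $F_p'(x)\ge 0$ for every feasible $x$, whereas you check it at the left endpoint and appeal to convexity of $F_p$; these are equivalent.
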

\begin{proof}
By convexity of the stage cost function and Theorem \ref{thm:HJB}(ii),  $\phi(p)$ is the optimal solution of a convex program. Therefore, if $\phi(p_0) = -p_0$ for some $p_0 \leq 0$, we have
$$g'(-p_0) + C'(0) \geq 0.$$
Thus, by convexity of stage cost, $g(-p) \geq g(-p_0)$, for any $p \leq p_0$. Therefore, by convexity of $C(\cdot)$ and $g(\cdot)$,
$$g'(x) + C'(x + p) \geq g'(-p) + C'(0) \geq 0, \foral x \geq -p,$$
which immediately implies optimality of $(-p)$, for $p \leq p_0$.

Similarly, for the case where $\phi(p_1) = 0$, we have
$g'(0) + C'(p_1) \geq 0,$
which implies
$$g'(x) + C'(x + p) \geq g'(0) + C'(p) \geq 0, \quad \foral p \geq p_1,$$
hence, the objective is nondecreasing for all feasible $x$ and $\phi(p) = 0$.
\end{proof}

\begin{lemma}\label{lemma:dC_bound}
Let $C(s)$ be defined as in (\ref{value_function}), and assume that the stage cost $g(\cdot)$ is convex. Then
\begin{equation}\label{dC_ineq}
    \frac{dC}{ds}(s) \geq -\frac{Q}{r} \E_W[g(W)], \quad 0 \leq s \leq \bar s.
\end{equation}
\end{lemma}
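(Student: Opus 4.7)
The plan is to derive the lower bound directly from the HJB equation (\ref{HJB_eq}) by upper-bounding the minimization appearing on its right-hand side through a judicious feasible choice of the control.

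First I would recall, from Theorem \ref{thm:HJB}(iii), that for every $s \in [0,\bar s]$,
\[
\frac{dC}{ds}(s) = \frac{Q+\theta}{r}\,C(s) - \frac{Q}{r}\,\E\!\left[\min_{u \in [0,\min\{s,W\}]}\!\bigl(g(W-u) + C(s-u)\bigr)\right].
\]
The idea is that any feasible choice of $u$ inside the min produces an upper bound on that min, which in turn yields a lower bound on $dC/ds$. The natural candidate is $u = 0$, which is always feasible and gives
\[
\min_{u \in [0,\min\{s,W\}]}\!\bigl(g(W-u) + C(s-u)\bigr) \;\leq\; g(W) + C(s).
\]
Taking expectations with respect to $W$ and substituting into the HJB equation then yields
\[
\frac{dC}{ds}(s) \;\geq\; \frac{Q+\theta}{r}\,C(s) - \frac{Q}{r}\,\bigl(\E[g(W)] + C(s)\bigr) \;=\; \frac{\theta}{r}\,C(s) - \frac{Q}{r}\,\E[g(W)].
\]

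The last ingredient I would need is nonnegativity of $C$, which follows from Assumption \ref{assump:stage_cost}: since $g \geq 0$, every stage cost in the definition (\ref{discounted_cost}) is nonnegative, so $C_\mu(s) \geq 0$ for every admissible $\mu$, hence $C(s) \geq 0$. Combined with $\theta > 0$, the bound above collapses to the claimed inequality $\frac{dC}{ds}(s) \geq -\frac{Q}{r}\E_W[g(W)]$.

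There is no real obstacle here; the convexity of $g$ assumed in the statement is not actually exploited by this argument, and is presumably stated only because the lemma will be invoked in contexts (e.g., the proof of Theorem \ref{thm:collapse}, in particular the estimate (\ref{b0})) where convexity is already in force. The only subtle point worth mentioning is that the HJB equation requires $C$ to be continuously differentiable, but this is a blanket assumption already made immediately after Theorem \ref{thm:HJB}.
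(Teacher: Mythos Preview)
Your argument is correct. It differs from the paper's proof in one key respect: the paper first invokes convexity of $C$ (Theorem~\ref{thm:HJB}(ii), which in turn uses convexity of $g$) to obtain $\frac{dC}{ds}(s)\ge \frac{dC}{ds}(0)$, and then evaluates the HJB equation at $s=0$, where the feasible set collapses to $\{u=0\}$ so that the minimum equals $g(W)+C(0)$ exactly. You instead bound the minimum at \emph{every} $s$ by the feasible choice $u=0$, reaching the same inequality $\frac{dC}{ds}(s)\ge \frac{\theta}{r}C(s)-\frac{Q}{r}\E[g(W)]$ without appealing to convexity of $C$ at all. Your route is thus slightly more elementary and shows that the convexity hypothesis is superfluous for this lemma; the paper's route, on the other hand, makes clear why the hypothesis was stated (it is the standing assumption in the surrounding analysis, and the paper's proof genuinely uses it). Both proofs finish with the same step, namely $C\ge 0$ from Assumption~\ref{assump:stage_cost}.
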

\begin{proof}
By Theorem \ref{thm:HJB}(ii), the optimal cost function $C(s)$ is convex. Hence,
$\frac{dC}{ds}(s) \geq \frac{dC}{ds}(0).$
On the other hand, by Theorem \ref{thm:HJB}(iii), we can write
\begin{eqnarray*}
 \frac{dC}{ds}(0) &=& \frac{Q+\theta}{r}  C(0) -  \frac{Q}{r} \E_W\Big[\min_{u = 0} g(W-0) + C(0) \Big].
\end{eqnarray*}
Combining the two preceding relations proves the claim.
\end{proof}

\begin{lemma}\label{lemma:inactive_const}
If Assumption \ref{assump:neg_drift} holds, then the first constraint in (\ref{phi_def}) is never active, i.e., $\phi(p) < \min\{B, \bar s - p\}$.
\end{lemma}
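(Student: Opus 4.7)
The plan is a proof by contradiction, splitting on which branch of the $\min$ attains the upper bound $x^{\star} = \min\{B,\bar s-p\}$. Writing $L(x) = g(x) + C(x+p)$ for the objective in (\ref{phi_def}), strict convexity of $g$ (which holds in the context of Theorem \ref{thm:collapse}) together with convexity of $C$ from Theorem \ref{thm:HJB}(ii) makes $L$ a convex $C^{1}$ function, so a necessary condition for $\phi(p) = x^{\star}$ is $L'(x^{\star})\leq 0$.

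For the case $B \leq \bar s - p$, I would suppose $\phi(p) = B$, so that $L'(B) = g'(B) + C'(B+p) \leq 0$. Combining with Lemma \ref{lemma:dC_bound} yields $g'(B) \leq \tfrac{Q}{r}\E[g(W)]$. I would then derive a strict reverse inequality. The subgradient inequality applied to the strictly convex $g$ at $y=0,\ x=w$ gives $g(w) < g'(w)\,w$ for $w > 0$, and monotonicity of $g'$ upgrades this to $g(w) < g'(B)\,w$ on $(0,B]$. Since Assumption \ref{assump:stage_cost} implies $\pr(W > 0) > 0$, taking expectations preserves the strictness and yields $\E[g(W)] < g'(B)\,\E[W]$. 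Multiplying by $Q/r$ and invoking Assumption \ref{assump:neg_drift} gives $\tfrac{Q}{r}\E[g(W)] < g'(B)$, contradicting the previous bound. The positivity $g'(B) > 0$ needed at the end follows from the observation that a convex, strictly increasing $g$ with $g(0) = 0$ cannot have $g'$ vanishing on any initial segment of $[0,B]$.

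The case $\bar s - p < B$ (with $p < \bar s$; at the degenerate corner $p = \bar s$ the feasible set collapses to $\{0\}$ and the claim is vacuous) is immediate: $\phi(p) = \bar s - p > 0$ would force $L'(\bar s - p) \leq 0$, and the boundary condition (\ref{HJB_BC}) gives $C'(\bar s) = 0$, leaving $g'(\bar s - p) \leq 0$, contradicting $g'(x) > 0$ for $x > 0$.

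The principal obstacle is the strictness in Case 1: mere weak convexity of $g$ paired with the weak version of Assumption \ref{assump:neg_drift} would only yield $\tfrac{Q}{r}\E[g(W)] \leq g'(B)$, not enough to contradict $g'(B) \leq \tfrac{Q}{r}\E[g(W)]$. Strict convexity of $g$ is precisely what promotes the pointwise bound $g(w) \leq g'(B)\,w$ to a strict one on $(0,B]$, and $\pr(W > 0) > 0$ from Assumption \ref{assump:stage_cost} is what lets that strictness survive integration against $f_W$.
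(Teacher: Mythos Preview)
Your proof is correct and follows essentially the same route as the paper: the same two-case split on which branch of $\min\{B,\bar s-p\}$ is smaller, the boundary condition $C'(\bar s)=0$ for the $\bar s-p$ branch, and the combination of Lemma~\ref{lemma:dC_bound}, Assumption~\ref{assump:neg_drift}, and the convexity bound $g(w)\le g'(B)\,w$ for the $B$ branch. The only difference is cosmetic---you argue by contradiction from $L'(x^\star)\le 0$, while the paper argues directly that $L'(x^\star)\ge 0$---and you are more careful about the strictness: the paper's chain of inequalities yields only $L'(x^\star)\ge 0$, which by itself does not exclude $\phi(p)=x^\star$, whereas you correctly extract the strict inequality from strict convexity of $g$ together with $\pr(W>0)>0$.
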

\begin{proof}
We show that under Assumption \ref{assump:neg_drift}, the slope of the objective function is always non-negative at $x = \min\{B, \bar s - p\}$. In the case where $\bar s - p \leq B$, we have
$$\frac{\partial}{\partial x}\Big(g(x) + C(x+p)\Big)\Big|_{x = \bar s - p} = g'(\bar s - p) + C'(\bar s) \geq 0,$$
where the inequality follows from monotonicity of $g$ and (\ref{HJB_BC}). For the case where $\bar s - p \geq B$, we employ Lemma \ref{lemma:dC_bound} and Assumption \ref{assump:neg_drift} to write
\begin{eqnarray*}
&&\hspace{-15pt} \frac{\partial}{\partial x}\Big(g(x) + C(x+p)\Big)\Big|_{x = B} = g'(B) + C'(B+p) \\
&&\geq g'(B)  -\frac{Q}{r} \E_W[g(W)] \geq g'(B)  -\frac{\E_W[g(W)]}{\E[W]} \geq 0,
\end{eqnarray*}
where the last inequality holds because $g(w) \leq w g'(B)$, for all $w \leq B$, which is a convexity result.
\end{proof}

\vspace{.1in}
{\bf Proof of Theorem \ref{thm:collapse}}:
By Theorem \ref{thm:HJB}(iii), we can characterize the optimal policy as
\begin{eqnarray}\label{policy_convex_program}
   \mu^*(s,w) &=&  \textrm{argmin} \ \   g(w-u) + C(s-u)  \\
  && \textrm{      s.t.    }  0\leq u \leq \min\{s,w\}. \nonumber
\end{eqnarray}

Note that the optimization problem in (\ref{policy_convex_program}) is convex, because $g(\cdot)$ and hence, $C(\cdot)$ is convex (cf. Theorem \ref{thm:HJB}(ii)). Using the change of variables
$$x = w - u, \quad p = s - w,$$
we can rewrite (\ref{policy_convex_program}) as $\mu^*(s,w) = w - x^*(p,w)$, where
\begin{eqnarray}\label{transformed_problem}
  x^*(p,w) &=& \textrm{argmin} \ \   g(x) + C(p+x)  \\
  &&  \textrm{      s.t.    } x \geq \max\{0, -p\} \nonumber \\
  &&  \qquad x \leq w. \nonumber \label{extra_const}
\end{eqnarray}

The optimization problem in (\ref{transformed_problem}) depends on both parameters $p$ and $w$. We may remove the dependency on $w$ as follows. Since $w \leq B, \bar s - p$, we may relax the last constraint, $x \leq w$, by replacing it with
$x \leq \min\{B, \bar s - p\} $
The optimal solution of the relaxed problem is the same as $\phi(p)$ defined in (\ref{phi_def}). If $\phi(p) < w$, then the relaxed constraint is not active, and $\phi(p)$ is also the solution of (\ref{transformed_problem}). Otherwise, since we have a convex problem, the constraint $x \leq w$ must be active, which uniquely identifies the optimal solution as $w$. Therefore, the optimal solution of the problem in (\ref{transformed_problem}) is given by $x^*(p,w) = \min\{\phi(p), w\}$. Combining the preceding relations, we obtain
$$\mu^*(s,w) = w -  \min\{\phi(s-w), w\} = \Big[w - \phi(s-w)\Big]^+.$$

The representation in (\ref{kernel_partition}) is  a direct consequence of Lemmas \ref{lemma:phi_sticky} and \ref{lemma:inactive_const}. Between some break-points $b_0$ and $b_1$, the optimal solution of (\ref{phi_def}) can only be an interior solution, which is given by (\ref{interior_eq}). The uniqueness of  $\phi^\circ(p)$ follows from strict convexity of $g$. Finally, by continuous differentiability of the cost function, equation (\ref{interior_eq})  should hold at the break-points as well. Therefore,
$$  g'(b_0)  + C'(b_0+ (-b_0)) = 0,\quad  g'(0)  + C'(0 + b_1) = 0,$$
which is equivalent to the characterizations in (\ref{b0}) and (\ref{b1}). The first inequality in (\ref{b0}) holds by Lemma \ref{dC_ineq} and convexity of $g(\cdot)$, and the second inequality holds by Assumption \ref{assump:neg_drift} and applying convexity of $g(\cdot)$ again.
\bqed
\begin{lemma}\label{lemma:phi_monotonicity}
Let $\phi(p)$ be defined as in (\ref{phi_def}), and assume that Assumption \ref{assump:neg_drift} holds and the stage cost $g(\cdot)$ is strictly convex. Then for all $p_1 \leq p_2$,
\begin{equation}\label{phi_monotonicity}
    -(p_2 - p_1) \leq \phi(p_2) - \phi(p_1) \leq 0.
\end{equation}
\end{lemma}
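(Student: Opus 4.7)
The plan is to leverage the piecewise representation \eqref{kernel_partition} of the kernel function established in Theorem \ref{thm:collapse}. The two inequalities to prove are equivalent to saying that $\phi$ is non-increasing (upper bound) and that the shifted function $\psi(p):=\phi(p)+p$ is non-decreasing (lower bound). Because $\phi$ is continuous on $[-B,\bar s]$ (a consequence of the boundary matching $\phi^\circ(b_0)=-b_0$ and $\phi^\circ(b_1)=0$ used in Theorem \ref{thm:collapse}), it suffices to verify both monotonicity properties piece by piece and then stitch them together across breakpoints.

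On the outer pieces the claim is immediate. On $[-B,b_0]$ we have $\phi(p)=-p$, so $\phi$ decreases at rate one and $\psi(p)\equiv 0$ is constant, saturating the lower bound. On $[b_1,\bar s]$ we have $\phi(p)\equiv 0$, so $\phi$ is constant and $\psi(p)=p$ is strictly increasing, saturating the upper bound. In both regions the pair of inequalities \eqref{phi_monotonicity} holds trivially.

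The main step is the interior piece, where I would argue directly from the first-order condition \eqref{interior_eq}, $g'(\phi^\circ(p))+C'(\phi^\circ(p)+p)=0$, using strict convexity of $g$ and convexity of $C$ (the latter from Theorem \ref{thm:HJB}(ii)). Fix $b_0\leq p_1<p_2\leq b_1$ and write $x_i=\phi^\circ(p_i)$. For the upper bound, suppose for contradiction that $x_2>x_1$. Then $g'(x_2)>g'(x_1)$ by strict convexity, and $x_2+p_2>x_1+p_1$ gives $C'(x_2+p_2)\geq C'(x_1+p_1)$ by monotonicity of $C'$; adding these contradicts the fact that both sides of the FOC vanish. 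For the lower bound, suppose $x_2+p_2<x_1+p_1$; combined with the already established $x_2\leq x_1$, the strict inequality $p_1<p_2$ forces $x_2<x_1$ strictly, so now $g'(x_2)<g'(x_1)$ strictly and $C'(x_2+p_2)\leq C'(x_1+p_1)$; summing again contradicts FOC.

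Finally I would combine the cases. For arbitrary $p_1<p_2$, insert whichever of $b_0,b_1$ lie in $(p_1,p_2)$ and apply the within-piece results successively, using continuity of $\phi$ at the breakpoints to chain the inequalities. The main obstacle is essentially bookkeeping: ensuring the contradictions in the interior piece are set up so that at least one of $g'$ or $C'$ yields a strict inequality to defeat the FOC, and checking that the breakpoint values $\phi(b_0)=-b_0$ and $\phi(b_1)=0$ glue the three pieces together so that the slope bounds $(-1,0]$ are inherited by $\phi$ on the whole interval $[-B,\bar s]$.
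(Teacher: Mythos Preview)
Your proof is correct and follows essentially the same approach as the paper: both exploit the piecewise structure of $\phi$ from Theorem~\ref{thm:collapse} and obtain the two monotonicity bounds on the interior piece by contradiction from the first-order condition \eqref{interior_eq}, using strict convexity of $g$ and convexity of $C$ (Theorem~\ref{thm:HJB}(ii)). The only organizational difference is that the paper handles some cross-piece cases directly via the inequality form of the KKT condition (e.g., $g'(\phi(p_1))+C'(p_1+\phi(p_1))\geq 0$ when $p_1\in[-B,b_0]$), whereas you restrict to within-piece arguments and then glue across $b_0,b_1$ by continuity; both routes are valid and the underlying mechanism is identical.
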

\begin{proof}
We first establish the monotonicity of $\phi(p)$. Let $p_1 < p_2$. Given the structure of the kernel function in (\ref{kernel_partition}), there are multiple cases to consider, for most of which the claim is immediate using (\ref{kernel_partition}). We only present the case where $-B \leq p_1 \leq b_1$ and $b_0 \leq p_2 \leq b_1$. A necessary optimality condition at $p_1$ is given by
\begin{equation}\label{p1_nec_optimality}
    g'(\phi(p_1)) + C'(p_1 + \phi(p_1)) \geq 0.
\end{equation}

Similarly, for $p_2$, we must have
\begin{equation}\label{p2_nec_optimality}
    g'(\phi(p_2)) + C'(p_2 + \phi(p_2)) = 0,
\end{equation}

Now, assume $\phi(p_2) > \phi(p_1)$. By convexity of $C(\cdot)$ (cf. Theorem \ref{thm:HJB}(ii)) and strict convexity of $g(\cdot)$,  we obtain
$$g'(\phi(p_2)) + C'(p_2 + \phi(p_2)) > g'(\phi(p_1)) + C'(p_1 + \phi(p_1)) \geq 0,$$
which is a contradiction to (\ref{p2_nec_optimality}).

For the second part of the claim, again, we should consider several cases depending on the interval to which $p_1$ and $p_2$ belong. Here, we present the case where $b_0 \leq p_1 \leq b_2$ and $b_0 \leq p_2 \leq \bar s$. The remaining cases are straightforward using (\ref{kernel_partition}). In this case, we have
\begin{equation}\label{p1_nec_optimality2}
    g'(\phi(p_1)) + C'(p_1 + \phi(p_1)) = 0,
\end{equation}
\vspace{-.2in}
\begin{equation}\label{p2_nec_optimality2}
    g'(\phi(p_2)) + C'(p_2 + \phi(p_2)) \geq 0.
\end{equation}
Combine the optimality conditions in (\ref{p1_nec_optimality2}) and (\ref{p2_nec_optimality2}) to get
\begin{equation}\label{combined_optimality}
        g'(\phi(p_2)) + C'(p_2 + \phi(p_2)) \geq     g'(\phi(p_1)) + C'(p_1 + \phi(p_1))
\end{equation}

Assume $\phi(p_2) < \phi(p_1)$; otherwise, the claim is trivial. By strict convexity of $g(\cdot)$, we have $g'(\phi(p_2)) < g'(\phi(p_1))$. Therefore by (\ref{combined_optimality}), it is true that
\begin{equation}\label{strict_Cp_relation}
    C'(p_2 + \phi(p_2)) > C'(p_1 + \phi(p_1)).
\end{equation}
Now assume $\phi(p_2) - \phi(p_1) < -(p_2 - p_1)$. By rearranging the terms of this inequality and invoking the convexity of $C(\cdot)$, we get
$  C'(p_2 + \phi(p_2)) \leq C'(p_1 + \phi(p_1)),$
which is in contradiction to (\ref{strict_Cp_relation}). Therefore, the claim holds.
\end{proof}
\vspace{.1in}
{\bf Proof of Theorem \ref{thm:policy_monotonicity}}:
First, note that by Lemma \ref{lemma:phi_monotonicity}, we get
$$\phi(s_2 - w) \leq \phi(s_1 - w), \quad \foral w, s_1 \leq s_2$$
which implies (cf. Theorem \ref{thm:collapse})
$$\mu^*(s_2,w)\hspace{-3pt} = \big[w - \phi(s_2 - w)]^+ \geq \big[w - \phi(s_1 - w)]^+\hspace{-3pt} = \mu^*(s_1,w).$$

Moreover, for all $s$ and $w_1 \leq w_2$, we can use the second part of Lemma \ref{lemma:phi_monotonicity} to conclude
$$\phi(s - w_1) - \phi(s - w_2) \geq -(w_2 - w_1).$$
By rearranging the terms, it follows that
$$\mu^*(s,w_2)\hspace{-3pt} =\hspace{-3pt} \big[w_2 - \phi(s - w_2)]^+ \hspace{-3pt}\geq\hspace{-3pt} \big[w_1 - \phi(s - w_1)]^+ \hspace{-3pt} = \mu^*(s,w_1),$$
which completes the proof. \bqed

\bibliographystyle{unsrt}
\bibliography{reliabilityref}

\end{document}